\begin{document}
\title{Non-Commutative Homometry in the Dihedral Groups}
\author{Gr\'egoire Genuys}
\institute{ UPMC/IRCAM/CNRS \\
\email{ gregoire.genuys@ircam.fr}
}
\maketitle

\begin{abstract}
The paper deals with the question of homometry in the dihedral groups $D_{n}$ of order $2n$. These groups have the specificity to be non-commutative. It leads to a new approach as compared as the one used in the traditional framework of the commutative group $ \mathbb{Z}_{n}$. We give here a musical interpretation of homometry in $D_{12}$ using the well-known neo-Riemannian groups, some computational results concerning enumeration of homometric sets for small values of $n$, and some properties disclosing important links between homometry in $\mathbb{Z}_{n}$ and homometry in $D_{n}$. Finally we propose an extension of musical applications for this non-commutative homometry.
\end{abstract}

\section{Introduction}

This paper gathers results drawn from two of our works: the article "Homometry in the Dihedral Groups: Lifting Sets from $ \mathbb{Z}_{n}$ to $ D_{n}$" (\cite{Genuys1}) and the Ph.D. thesis "Etude de deux concepts math\'ematico-musicaux : l'homom\'etrie non-commutative et les distances d'accords." (\cite{Genuys2}). Our idea is to propose an extended version of \cite{Genuys1} adding some results of \cite{Genuys2}.

The concept of homometry first appeared in the 1930s in the field of cristallography. The question was to determine the structure of a crystal from its X-ray diffraction pattern. This type of measurement is directly related to the intensity of the Fourier transform of the crystallographic structure, but the phase information is lost in the process. The problem was therefore to know if a complete reconstruction of the structure of the crystal was possible. This problem later found applications in various fields, such as music theory where the question was to characterize a set of notes (a chord, a melody) from the intervals that compose it. Homometry has been studied through different approaches: group theory \cite{Lachaussee}, Fourier transform \cite{Amiot}, distribution theory \cite{Mandereau1,Mandereau2}, etc. and is an open field of research.

The classical way to model the $n$-tone equal temperament in musical set theory is to consider the cyclic group $\mathbb{Z}_{n}=\mathbb{Z}/n\mathbb{Z}$ as the set of pitch classes, for instance $\mathbb{Z}_{12}=\{ 0=C, 1=C^{\sharp}, ..., 11=B\}$. Following David Lewin's constructions described and systematized in \cite{Lewin}, for any subsets $A$ and $B$ in $\mathbb{Z}_{n}$, one can consider the interval function \textbf{ifunc}($A,B$) whose components are 
$$
\textbf{ifunc}(A,B)(k)=\sharp \{ (a,b) \in A \times B \mid b-a = k\}
$$
for $k\in \mathbb{Z}_{n}$, and the interval vector \textbf{iv}($A$) whose components are defined by $\textbf{iv}(A)(k)=\textbf{ifunc}(A,A)(k)$. Two sets $A$ and $B$ in $\mathbb{Z}_{n}$ are \textit{homometric} if they have the same interval vector ($\textbf{iv}(A)=\textbf{iv}(B)$), meaning that they contain the same set of intervals. This is traditionally called \textit{Z-relation} and was mainly presented by Forte \cite{Forte}. In this paper we will only use the word homometry which refers to the same concept. The actions of transposition and inversion clearly do not change the interval vector of a set, hence two homometric sets which do not belong to the same set class modulo transpositions and inversions will be called \textit{non-trivial homometric sets}. A well-known example of a non-trivial homometric pair in $\mathbb{Z}_{12}$ is $(\{C,D^{\flat},E,G^{\flat}\}, \{C,D^{\flat},E^{\flat},G\})$, for which the interval vector is $[1,1,1,1,1,2,1,1,1,1,1]$. More detailled explanations can be found in \cite{Mandereau1} and \cite{Jed}.

The concept of group action of $(\mathbb{Z}_{n},+)$ on itself by translation (where $n\in \mathbb{Z}_{n} $ acts on $a\in \mathbb{Z}_{n}$ by $n+a$) can also be used to define the interval vector. Thus we call \textit{interval} between $a$ and $b$, written \textbf{int}$(a,b)$, the element $n$ such that $n+a=b$, and \textbf{iv}($A$)$(k)=\sharp \{ (a,b) \in A^2 \mid \textbf{int}(a,b) = k\}$ for $k\in \mathbb{Z}_{n}$. In a more general setting and following Lewin's idea of generalized intervals \cite{Lewin}, one can consider homometry in the context of any simply transitive group action. For instance it is well-known that the $T/I$-group and the neo-Riemannian $PLR$-group both act simply transitively on the set $S$ of major and minor triads. We recall that the $T/I$-group is generated by the transpositions $T_p(x)=p+x$ and the inversions $I_p(x)=T_pI_0(x)$ i.e. $I_p(x)=-x+p$, for $p\in \mathbb{Z}_{n}$. The $PLR$-group is generated by $P$, $L$, and $R$ which correspond respectively to the operations parallel, leading tone exchange and relative (see \cite{Crans} for more details). The interval between two triads $s_1$ and $s_2$ is the unique element of the group sending $s_1$ to $s_2$ for the chosen group action. If we use upper-case letters for major triads and lower-case letters for minor triads ($C$ is $C$-major and $c$ is $C$-minor) we obtain for instance in the context of the $T/I$-group: \textbf{int}$_{T/I}(c,E^{\flat})=I_{10}$ and in the context of $PLR$-group: \textbf{int}$_{PLR}(c,E^{\flat})=R$. For a given Generalized Interval System $(S,\text{IVLS},\textbf{int})$, we can thus define the interval vector of a subset $A$ of $S$ as
$$\textbf{iv}(A)(k)=\sharp \{ (a,b) \in A^2 \mid \textbf{int}(a,b) = k\}$$
for $k$ in $\text{IVLS}$, and two subsets of $S$ will be called homometric if they have the same interval vector. As an example of homometry for both the actions of the $T/I$-group and the $PLR$-group, consider the pair of sets $\{ c, D^{\flat}, E^{\flat}, e, a^{\flat} \}$ and $\{ c, E^{\flat}, e, F, a^{\flat} \}$. Figure \ref{figleftright1} shows some of the intervals between the elements of these sets, for the action of the $T/I$-group and the $PLR$-group respectively. It can clearly be seen that the same intervals $\{T_2, T_4, T_4, T_8, I_0, I_2, I_4, I_6, I_8, I_{10}\}$ are present in both sets, hence they have the same interval vector (all other intervals can be deduced by composition and/or inversion). Similarly, the same intervals $\{R, PL, PL, PL, LRL, LRP, RLP, LPR, PRP, PRLR\}$ are present in both sets leading to an identical conclusion for the interval vector.

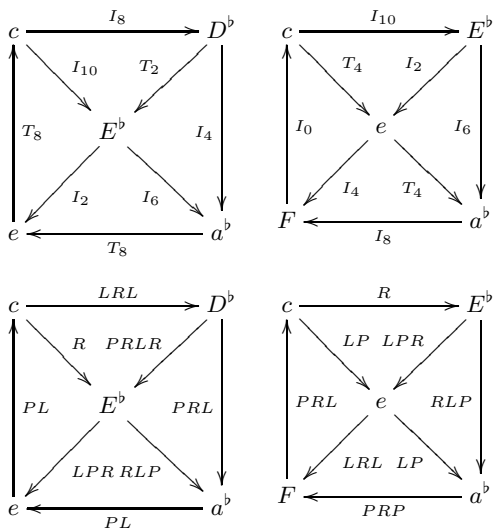
\begin{figure}[t!]
\[
\xymatrix{
c \ar[dr]^{I_{10}} \ar[rr]^{I_8} & & D^{\flat} \ar[dl]_{T_2} \ar[dd]_{I_4} \\
& E^{\flat} \ar[dl]^{I_2} \ar[dr]_{I_6} & \\
e \ar[uu]_{T_8} & & a^{\flat} \ar[ll]^{T_8} }
\quad
\xymatrix{
c \ar[dr]^{T_4} \ar[rr]^{I_{10}} & & E^{\flat} \ar[dl]_{I_2} \ar[dd]_{I_6} \\
& e \ar[dl]^{I_4} \ar[dr]_{T_4} & \\
F \ar[uu]_{I_0} & & a^{\flat} \ar[ll]^{I_8} }
\]

\[
\xymatrix{
c \ar[dr]^{R} \ar[rr]^{LRL} & & D^{\flat} \ar[dl]_{PRLR} \ar[dd]_{PRL} \\
& E^{\flat} \ar[dl]^{LPR} \ar[dr]_{RLP} & \\
e \ar[uu]_{PL} & & a^{\flat} \ar[ll]^{PL} }
\quad
\xymatrix{
c \ar[dr]^{LP} \ar[rr]^{R} & & E^{\flat} \ar[dl]_{LPR} \ar[dd]_{RLP} \\
& e \ar[dl]^{LRL} \ar[dr]_{LP} & \\
F \ar[uu]_{PRL} & & a^{\flat} \ar[ll]^{PRP} }
\]
\caption{Intervals in the $T/I$-group (top) and the $PLR$-group (bottom) for the two homometric sets $\{ c, D^{\flat}, E^{\flat}, e, a^{\flat} \}$ and $\{ c, E^{\flat}, e, F, a^{\flat} \}$.}
\label{figleftright1}
\end{figure}

It has been showed in \cite{Popoff} that the actions of the $T/I$-group and of the $PLR$-group on the set of major and minor triads can be understood respectively as the left and the right actions of the dihedral group $D_{12}$ on this set. Moreover it is well-known that $D_{12}$ is the semi-direct product $(\mathbb{Z}_{12},+) \rtimes (\mathbb{Z}_{2},\times)$. This lead us to the general topic of this paper, namely the study of homometry in the non-commutative dihedral groups of order $2n$, or equivalently the semi-direct products $(\mathbb{Z}_{n},+) \rtimes (\mathbb{Z}_{2},\times)$.

This paper is divided into four parts. We first recall the definition of $D_{n}$ as a semi-direct product and we define homometry for the right and left actions of this group. The link with the well-known musical case $n=12$ will be explained in the second part. In the third part we give the equations that characterize homometry and some results concerning enumeration. Finally we define in the last part the concept of \textit{lift} which bridges homometry in $\mathbb{Z}_{n}$ and homometry in $D_{n}$, and we give our main results using the discrete Fourier transform.

\section{The Dihedral Group $D_{n}$ as a Semi-direct Product}

The dihedral group $D_{n}$ is the group of symmetries of a regular $n$-gon, using rotations and reflections. It can be expressed as the semi-direct product $(\mathbb{Z}_{n},+) \rtimes (\mathbb{Z}_{2},\times)$, where $ \mathbb{Z}_{2}= \{ \pm 1 \}$. Its elements are the pairs $(k,\epsilon)$ where $k\in \mathbb{Z}_{n}$ and $\epsilon = \pm 1$, with the identity element being $(0,1)$, and multiplication between two elements being given by the equation
\begin{equation}
\label{equ_mult}
(k,\epsilon)(l,\eta)=(k+\epsilon l, \epsilon \eta).
\end{equation}
The inverse of an element $(k, \epsilon)$ is $(k, \epsilon)^{-1}=(-k\epsilon, \epsilon)$.
As a non-commutative group $D_{n}$ acts on itself by right or left multiplication: thus $(l, \eta)$ acting on $(k,\epsilon)$ on the right leads us to Eq. (\ref{equ_mult}), whereas $(l, \eta)$ acting on $(k,\epsilon)$ on the left leads us to
\begin{equation}
(l, \eta)(k,\epsilon)=(l+\eta k, \eta \epsilon).
\end{equation}

This allows us to define the intervals between any two pairs $(k_1, \epsilon_1)$ and $(k_2, \epsilon_2)$ of $D_{n}$. The left interval is the unique element $(l,\eta)$ in $D_{n}$ such that $(l,\eta)(k_1, \epsilon_1)=(k_2, \epsilon_2)$, whereas the right interval is the unique element $(l,\eta)$ such that $(k_1, \epsilon_1)(l,\eta)=(k_2, \epsilon_2)$. Thus we obtain two functions $^l\textbf{int} \colon D_{n} \times D_{n} \to D_{n}$ and $^r\textbf{int} \colon D_{n} \times D_{n} \to D_{n}$, called \textit{interval functions} and defined as 
\begin{equation}
^l\textbf{int}((k_1, \epsilon_1),(k_2, \epsilon_2))=(k_2-\epsilon_2/\epsilon_1 k_1, \epsilon_2/\epsilon_1),
\end{equation}
\begin{equation}
^r\textbf{int}((k_1, \epsilon_1),(k_2, \epsilon_2))=((k_2-k_1)/\epsilon_1, \epsilon_2/\epsilon_1).
\end{equation}

The left interval vector $^l\textbf{iv}(A)$ and right interval vector $^r\textbf{iv}(A)$ of a set $A$ in $D_{n}$ are then defined as
\begin{align*}
^{l,r}\textbf{iv}(A)((l,\eta))=\sharp \{ ((k_1, \epsilon_1),(k_2, \epsilon_2)) \in A^2 \mid  {^{l,r}\textbf{int}}((k_1, \epsilon_1),(k_2, \epsilon_2)) = (l,\eta)\}
\end{align*}
for $(l,\eta)\in D_{n}$. We say that two sets in $D_{n}$ are \textit{homometric for the left (resp. for the right) action} (or simply \textit{left-/right-homometric}) if they have the same left (resp. right) interval vector. It is easy to see that any left (resp. right) action preserves the right (resp. left) intervals. It is natural to search other operations in $D_{n}$ that preserve right or left intervals. Recall that in $\mathbb{Z}_{n}$ we considered sets modulo translations and inversions. So we will first check if the inversion in $D_{n}$ is interval preserving or not. In fact it is not. For instance the set $\lbrace(0,1),(2,-1),(3,1)\rbrace$ in $D_{12}$ has the following interval vectors:
\begin{align*}
^r \textbf{iv}&: [(0, 1), (2, -1), (3, 1), (2, -1), (0, 1), (11, -1), (9, 1), (11, 11),(0, 1)],\\
^l\textbf{iv}&:[(0, 1), (2, -1), (3, 1), (2, -1), (0, 1), (5,-1), (9, 1), (5, -1),(0, 1)].
\end{align*}
The inverse $\lbrace(0,1),(2,-1),(-3,1)\rbrace$ has different interval vectors:
\begin{align*}
^r \textbf{iv}&: [(0, 1), (2,-1), (9, 1), (2,-1), (0, 1), (5, -1), (3, 1), (5, -1), (0, 1)],\\
^l \textbf{iv}&:[(0, 1), (2, -1), (9, 1), (2, -1), (0, 1), (11, -1), (3, 1), (11, -1), (0, 1)].
\end{align*}

Obviously it can still exist other interval preserving operations. That is why we will study the group of automorphisms of the dihedral group, and look for interval preserving operations inside this group.

\begin{proposition}
The group $\mathcal{A}ut(D_{n})$ of automorphisms of the dihedral group $D_{n}$ is isomorphic to $\mathbb{Z}_{n} \rtimes \mathbb{Z}_{n}^{\star}$, where $\mathbb{Z}_{n}^{\star}$ is the group of invertible elements of $\mathbb{Z}_{n}$. Then it has $\phi(n)n$ elements, where $\phi(n)$ is the number of $k \in \lbrace1,...,n-1\rbrace$ coprime with $n$.
\end{proposition}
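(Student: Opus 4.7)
The plan is to show that every automorphism of $D_{n}$ is uniquely determined by its values on a rotation generator and a reflection, then count the admissible pairs and read off the semi-direct product structure.

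First I would observe that for $n\geq 3$ the rotation subgroup $R=\{(k,1)\mid k\in\mathbb{Z}_{n}\}$ is characteristic in $D_{n}$. Indeed, using Eq.~(\ref{equ_mult}) one sees directly that every element $(k,-1)$ satisfies $(k,-1)^{2}=(0,1)$, i.e.\ every reflection has order $2$, whereas $r=(1,1)$ has order $n\geq 3$; more generally $R$ is the unique cyclic subgroup of order $n$ in $D_{n}$, and is therefore preserved by every automorphism. Consequently any $\varphi\in\mathcal{A}ut(D_{n})$ restricts to an automorphism of $R\cong\mathbb{Z}_{n}$, which forces $\varphi(r)=(a,1)$ for some $a\in\mathbb{Z}_{n}^{\star}$, while the reflection $s=(0,-1)$ must be sent to some reflection $\varphi(s)=(k,-1)$ with $k\in\mathbb{Z}_{n}$.

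Second, for each pair $(k,a)\in\mathbb{Z}_{n}\times\mathbb{Z}_{n}^{\star}$ I would define a candidate map $\varphi_{k,a}$ by $\varphi_{k,a}(r)=(a,1)$ and $\varphi_{k,a}(s)=(k,-1)$, and check via Eq.~(\ref{equ_mult}) that the defining relation $srs=r^{-1}$ of $D_{n}$ is preserved; this guarantees $\varphi_{k,a}$ extends to a well-defined endomorphism, and the images $(a,1)$ and $(k,-1)$ generate $D_{n}$ (because $a$ is invertible), so $\varphi_{k,a}$ is in fact an automorphism. Together with the previous step this yields a bijection between $\mathcal{A}ut(D_{n})$ and $\mathbb{Z}_{n}\times\mathbb{Z}_{n}^{\star}$, already giving the cardinality $n\,\phi(n)$.

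Third I would transport the group law through this bijection. A short calculation using Eq.~(\ref{equ_mult}) gives
\[
\varphi_{k,a}\circ\varphi_{l,b}(r)=(ab,1),\qquad \varphi_{k,a}\circ\varphi_{l,b}(s)=(al+k,-1),
\]
so that $\varphi_{k,a}\circ\varphi_{l,b}=\varphi_{al+k,\,ab}$. This is precisely the multiplication law of the semi-direct product $\mathbb{Z}_{n}\rtimes\mathbb{Z}_{n}^{\star}$ in which $\mathbb{Z}_{n}^{\star}$ acts on $\mathbb{Z}_{n}$ by multiplication, which establishes the claimed isomorphism.

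The only genuine obstacle is the characteristicity of $R$, which I would handle by the order argument above (or equivalently by the uniqueness of the cyclic subgroup of order $n$); once this is secured, both the parametrisation of automorphisms and the composition formula are routine verifications on generators.
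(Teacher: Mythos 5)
Your proof is correct and follows the same route the paper takes: parametrising automorphisms by pairs $(k,a)\in\mathbb{Z}_{n}\times\mathbb{Z}_{n}^{\star}$ via their action on a rotation generator and a reflection, then reading off the semi-direct product law from composition. The paper merely asserts this as a known result and writes down the parametrisation $\pi(r^{p})=r^{kp}$, $\pi(cr^{q})=cr^{kq+l}$ without verification, so your argument (including the characteristicity of the rotation subgroup for $n\geq 3$, which is the one point needing care and is indeed required for the statement to hold) supplies exactly the details the paper omits.
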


This is a known result. If we use the following presentation for the dihedral group:
\begin{align*}
D_{n}&=\langle r,c \mid r^n=1, c^2=1, crc=r^{-1} \rangle \\
               &=\lbrace 1, r, r^2,..., r^{n-1}, c,cr,...,cr^{n-1}\rbrace,
\end{align*}
where the $r^p$ designate the rotations and the $cr^q$ designate the reflections, an automorphism $\pi=(l,k)\in \mathbb{Z}_{n} \times \mathbb{Z}_{n}^{\star}$ acts on $r^p$ by $\pi(r^p)=r^{kp}$ and on $cr^q$ by 
$$
\pi(cr^q)=\pi(c)\pi(r^q)=cr^lr^{kq}=cr^{kq+l}.
$$
In the point of view of semi-direct products, $r$ corresponds to $(1,1)$ and $c$ to $(0,-1)$, consequently $\pi=(l,k)$ acts on $(p,1)$ by $(kp,1)$ and on $(q,-1)$ by $(kq+l,-1)$. With this formulation we can now look at the interval preserving operations in $\mathcal{A}ut(D_{n})$. The result is that there is no such operation.

\begin{proposition}
\label{propnointervalpreserving}
There is no element of $\mathcal{A}ut(D_{n})$ that preserves intervals in $D_{n}$ except for the identity $(0,1)$.
\end{proposition}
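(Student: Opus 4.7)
The plan is to exploit the explicit parameterization $\pi=(l,k)\in\mathbb{Z}_n\rtimes\mathbb{Z}_n^{\star}$ given just before the statement, and to turn the interval-preservation condition into a handful of equations in the parameters $l$ and $k$. Since an automorphism is in particular a group homomorphism, I first observe the following shortcut: right intervals are of the form $x^{-1}y$, so if $\pi$ preserved right intervals we would have $\pi(x)^{-1}\pi(y)=\pi(x^{-1}y)=x^{-1}y$ for every $x,y\in D_n$; specializing $x=(0,1)$ immediately yields $\pi(y)=y$ for all $y$, hence $\pi=\mathrm{id}$. The same argument works on the left, so preserving either type of interval forces $\pi$ to be the identity.

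To fit the computational style of the paper, I would then give the parameter-level version, which is the one I expect the author to write out. Take $x_1=(0,1)$ and $x_2=(p,1)$. The right interval is $(p,1)$, while $\pi(x_1)=(0,1)$ and $\pi(x_2)=(kp,1)$ give right interval $(kp,1)$. Equality for every $p\in\mathbb{Z}_n$ forces $k=1$. Next take $x_1=(0,1)$ and $x_2=(q,-1)$: the right interval is $(q,-1)$, whereas $\pi(x_2)=(kq+l,-1)=(q+l,-1)$ yields right interval $(q+l,-1)$, so $l=0$. Hence $(l,k)=(0,1)$.

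The analogous check with the formula for $^l\textbf{int}$ (using the same two pairs) produces exactly the same two constraints $k=1$ and $l=0$, so the conclusion is unchanged if one interprets ``preserves intervals'' in the left-sided sense. There is no real obstacle here: the only thing one has to be a little careful about is that the formula for $\pi$ on reflections involves the translation part $l$, while its restriction to rotations does not, which is precisely why one needs \emph{two} test pairs (one in the rotation block, one crossing into the reflection block) in order to pin down both $k$ and $l$.
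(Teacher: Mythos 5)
Your proof is correct, and your opening observation is a genuinely different (and cleaner) route than the paper's. The paper works entirely at the parameter level: it computes $^r\textbf{int}((kp,1),(kq,1))=(k(q-p),1)$ to force $k=1$, then uses a pair straddling the rotation and reflection blocks to force $l=0$, and explicitly leaves the remaining cases to the reader. Your second paragraph reproduces essentially that computation (with the harmless simplification of anchoring one test point at the identity), so that part matches the paper's approach. What the paper does \emph{not} exploit is your first observation: since any $\pi\in\mathcal{A}ut(D_n)$ is a homomorphism, preservation of right intervals reads $x^{-1}y=\;^r\textbf{int}(\pi(x),\pi(y))=\pi(x)^{-1}\pi(y)=\pi(x^{-1}y)$, and setting $x=(0,1)$ gives $\pi=\mathrm{id}$ outright; the left-interval case is symmetric. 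This argument is shorter, covers all cases at once, and shows the statement has nothing to do with $D_n$ specifically --- it holds for automorphisms of any group --- whereas the paper's computation has the minor virtue of staying within the explicit $(l,k)$ parameterization it has just set up and of not relying on the homomorphism property (so it would also detect non-preservation by arbitrary bijections of the given form). Both are valid; yours is the more economical and more general proof.
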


\begin{proof}
It is not complicated but a bit long if we want to do all the cases. We will do only one case and leave the others to the reader. Let us look, for instance, at the right interval between $(p,1)$ and $(q,1)$
$$
^r\textbf{int}((p,1),(q,1))=(q-p,1).
$$
If we apply $(l,k)$ we get $(l,k)(p,1)=(kp,1)$ and $(l,k)(q,1)=(kq,1)$. Then
$$
^r\textbf{int}((kp,1),(kq,1))=(k(q-p),1)
$$
and if $(l,k)$ is interval preserving we must have $k=1$. 

If $k=1$ we can look at
$$
^r\textbf{int}((p',-1),(q',1))=(p'-q',-1)
$$
and after the left action of $(l,1)$ we get
$$
^r\textbf{int} ((p'+l,-1),(q',-1))=(p'+l-q',-1).
$$
If $(l,1)$ is interval preserving we must have $l=0$. It works the same for left intervals.
\qed
\end{proof}

We did not find any other interval preserving operation, that is why we keep the following definition for non-trivial homometry. 

\begin{definition}
We say that two sets in $D_{n}$ are \textnormal{non-trivially} right- (resp. left-) homometric if there are right- (resp. left-) homometric and not linked by left (resp. right) translation.
\end{definition}

The homometric sets given in the introduction are non-trivially homometric. In what follows, when we do not specify, 'homometric sets' will mean 'non-trivial homometric sets'.

\section{Link with the $T/I$ and the $PLR$-groups in the Case $n=12$}

As mentioned in \cite{Popoff}, the actions of the $T/I$-group and the $PLR$-group on the set of major and minor triads can be considered as the left and right actions of $D_{12}$ on $S$, but also as the actions of $D_{12}$ on itself. To understand why, we use a (non canonical) bijection between $D_{12}$ and $S$. The element $(s,+1)$ of $D_{12}$ will be identified to the major triad whose root is $s \in \mathbb{Z}_{12}$, whereas the element $(s,-1)$ of $D_{12}$ will be considered as the minor triad whose root is $s \in \mathbb{Z}_{12}$. For instance $(0,1)$ corresponds to $C$, $(0,-1)$ corresponds to $c$, $(8,-1)$ corresponds to $a^{\flat}$, and so on. The set $\{ c, D^{\flat}, E^{\flat}, e, a^{\flat} \}$ given in the introduction can be then identified with the set $\{ (0,-1), (1,1), (3,1), (4,-1), (8,-1) \}$.

If we consider the left action of $D_{12}$ on itself we have the following isomorphism between $D_{12}$ (as the acting group) and the $T/I$-group: $(p,+1)$ corresponds to $T_p$ and $(p,-1)$ corresponds to $I_{p-5}$. For instance the image of $C$ by $T_7$ is calculated in $D_{12}$ as the element corresponding to $(7,1)(0,1)=(7,1)$, i.e the major chord $G$. Similarly the image of $C$ by $I_2$ is calculated to be the element corresponding to $(7,-1)(0,1)=(7,-1)$, i.e. the minor chord $g$.

If we consider the right action of $D_{12}$ on itself we have the following bijection between $D_{12}$ and the $PLR$-group: $P$ corresponds to $(0,-1)$, $L$ corresponds to $(4,-1)$ and $R$ corresponds to $(9,-1)$. For instance $P(C)$ is calculated as the element corresponding to $(0,1)(0,-1)=(0,-1)$ which is $c$, $L(d^{\flat})$ corresponds to $(1,-1)(4,-1)=(9,1)$ which is $A$, and $R(F)$ to $(4,1)(9,-1)=(1,-1)$ which is $d^{\flat}$. We obtain on Figure \ref{figright} a new version of Figure \ref{figleftright1} with the left and the right intervals in $D_{12}$.
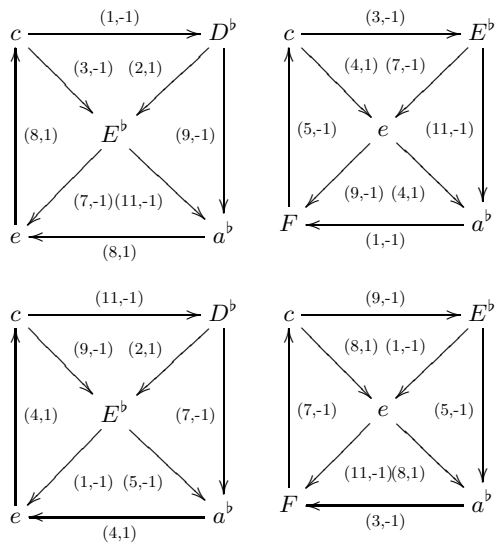
\begin{figure}[t!]
\[
\xymatrix{
c \ar[dr]^{\scalebox{0.7}{(3,-1)}} \ar[rr]^{\scalebox{0.7}{(1,-1)}} & & D^{\flat} \ar[dl]_{\scalebox{0.7}{(2,1)}} \ar[dd]_{\scalebox{0.7}{(9,-1)}} \\
& E^{\flat} \ar[dl]^{\scalebox{0.7}{(7,-1)}} \ar[dr]_{\scalebox{0.7}{(11,-1)}} & \\
e \ar[uu]_{\scalebox{0.7}{(8,1)}} & & a^{\flat} \ar[ll]^{\scalebox{0.7}{(8,1)}} }
\quad
\xymatrix{
c \ar[dr]^{\scalebox{0.7}{(4,1)}} \ar[rr]^{\scalebox{0.7}{(3,-1)}} & & E^{\flat} \ar[dl]_{\scalebox{0.7}{(7,-1)}} \ar[dd]_{\scalebox{0.7}{(11,-1)}} \\
& e \ar[dl]^{\scalebox{0.7}{(9,-1)}} \ar[dr]_{\scalebox{0.7}{(4,1)}} & \\
F \ar[uu]_{\scalebox{0.7}{(5,-1)}} & & a^{\flat} \ar[ll]^{\scalebox{0.7}{(1,-1)}} }
\]
\[
\xymatrix{
c \ar[dr]^{\scalebox{0.7}{(9,-1)}} \ar[rr]^{\scalebox{0.7}{(11,-1)}} & & D^{\flat} \ar[dl]_{\scalebox{0.7}{(2,1)}} \ar[dd]_{\scalebox{0.7}{(7,-1)}} \\
& E^{\flat} \ar[dl]^{\scalebox{0.7}{(1,-1)}} \ar[dr]_{\scalebox{0.7}{(5,-1)}} & \\
e \ar[uu]_{\scalebox{0.7}{(4,1)}} & & a^{\flat} \ar[ll]^{\scalebox{0.7}{(4,1)}} }
\quad
\xymatrix{
c \ar[dr]^{\scalebox{0.7}{(8,1)}} \ar[rr]^{\scalebox{0.7}{(9,-1)}} & & E^{\flat} \ar[dl]_{\scalebox{0.7}{(1,-1)}} \ar[dd]_{\scalebox{0.7}{(5,-1)}} \\
& e \ar[dl]^{\scalebox{0.7}{(11,-1)}} \ar[dr]_{\scalebox{0.7}{(8,1)}} & \\
F \ar[uu]_{\scalebox{0.7}{(7,-1)}} & & a^{\flat} \ar[ll]^{\scalebox{0.7}{(3,-1)}} }
\]
\caption{Left (top) and right (bottom) intervals in $D_{12}$ for the two sets $\{ c, D^{\flat}, E^{\flat}, e, a^{\flat} \}$ and $\{ c, E^{\flat}, e, F, a^{\flat} \}$.}
\label{figright}
\end{figure}

Since we are interested in the general case of homometry in $D_{n}$, we will work from now on using the general point of view of semi-direct products and their group elements $(l, \eta)$.

\section{Homometry in $D_{n}$: Formulas and Enumeration}

In order to avoid confusions we will adopt the notation '$\mathcal{A}$' for subsets in $D_{n}$, and the notation '$A$' for subsets in $\mathbb{Z}_{n}$. Given the form of the group elements of $D_{n}$ as pairs $(l, \eta)$, a set $\mathcal{A}\in D_{n}$ is the disjoint union of two (possibly empty) subsets $\mathcal{A}_{+}$ and $\mathcal{A}_{-}$, with $\mathcal{A}_{+} = \{(l, \eta) \in \mathcal{A} \mid \eta=1\}$, and $\mathcal{A}_{-} = \{(l, \eta) \in \mathcal{A} \mid \eta=-1\}$. For instance in $D_{12}$ the set
\begin{eqnarray*}
\mathcal{A}=\{ c, D^{\flat}, E^{\flat}, e, a^{\flat} \}=\{ (0,-1), (1,1), (3,1), (4,-1), (8,-1) \}
\end{eqnarray*}
given in the introduction is the union of the major chords $\{ D^{\flat}, E^{\flat} \}=\{ (1,1),(3,1)\}$ and  the minor chords $\{ c, e, a^{\flat} \}=\{ (0,-1),(4,-1),(8,-1)\}$. Let $\pi: D_{n} \longrightarrow \mathbb{Z}_{n}$ be the projection on the first factor, i.e. $\pi((l, \eta)) = l$. We define the sets $A_+$ and $A_-$ as $A_+=\pi(\mathcal{A}_{+})=\{\pi((l, \eta)) \mid (l, \eta) \in \mathcal{A}_{+}\}$, and $A_-=\pi(\mathcal{A}_{-})=\{\pi((l, \eta)) \mid (l, \eta) \in \mathcal{A}_{-}\}$. In the example above, we have $A_+= \{ 1,3 \} \subset \mathbb{Z}_{12}$ and $A_-=\{ 0,4,8 \} \subset \mathbb{Z}_{12}$. Remark that we have $\pi(\mathcal{A})=\pi(\mathcal{A}_+) \cup \pi(\mathcal{A}_-)=A_+ \cup A_-$. If there is no ambiguity we will call $A= \pi(\mathcal{A})$.

The purpose of the following theorem is to give a characterization of homometry in $D_{n}$ using \textbf{iv} and \textbf{ifunc}.

\begin{theorem}
\label{thmcond}
Two sets $\mathcal{A}$ and $\mathcal{B}$ in $D_{n}$ are homometric for the right action if and only if the following two equations hold:
\begin{eqnarray}
\label{equrightcond}
& \left\{
    \begin{array}{lc}
        \textbf{iv}(A_+) + \textbf{iv}(A_-) = \textbf{iv}(B_+) + \textbf{iv}(B_-) &\\
        \textbf{ifunc}(A_+,A_-)=\textbf{ifunc}(B_+,B_-). &
    \end{array}
\right. & 
\end{eqnarray}

Two sets $\mathcal{A}$ and $\mathcal{B}$ in $D_{n}$ are homometric for the left action if and only if the following two equations hold:
\begin{eqnarray}
\label{equleftcond}
& \left\{
    \begin{array}{lc}
        \textbf{iv}(A_+) + \textbf{iv}(A_-) = \textbf{iv}(B_+) + \textbf{iv}(B_-) &\\
        \textbf{ifunc}(I_0A_+,A_-)=\textbf{ifunc}(I_0B_+,B_-). &
    \end{array}
\right. & 
\end{eqnarray}
\end{theorem}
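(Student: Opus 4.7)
The plan is to compute the right (resp. left) interval vector of $\mathcal{A}$ component by component, according to the sign $\eta$ of the interval, and then read off the two equations as the conditions that the components agree. The starting point is that every ordered pair in $\mathcal{A}^2$ falls into one of four boxes depending on the signs $(\epsilon_1,\epsilon_2)$ of its two entries, and the interval formulas from Section~2 tell us exactly which sign $\eta$ each box contributes to. Concretely, for the right action: pairs in $\mathcal{A}_+^2$ and $\mathcal{A}_-^2$ contribute to intervals of sign $+1$, while pairs in $\mathcal{A}_+\times\mathcal{A}_-$ and $\mathcal{A}_-\times\mathcal{A}_+$ contribute to intervals of sign $-1$. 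For the left action the boxes are the same but the first coordinate of the interval is computed with a sum $k_1+k_2$ instead of a difference when the signs disagree, which is precisely why $I_0$ will show up.

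First I would do the right case. Using $^r\textbf{int}((k_1,\epsilon_1),(k_2,\epsilon_2))=((k_2-k_1)/\epsilon_1,\epsilon_2/\epsilon_1)$, a direct split yields
\[
{}^r\textbf{iv}(\mathcal{A})((k,1)) \;=\; \textbf{iv}(A_+)(k) + \textbf{iv}(A_-)(-k),
\qquad
{}^r\textbf{iv}(\mathcal{A})((k,-1)) \;=\; \textbf{ifunc}(A_+,A_-)(k) + \textbf{ifunc}(A_-,A_+)(-k).
\]
Then I would invoke the two elementary symmetries $\textbf{iv}(X)(-k)=\textbf{iv}(X)(k)$ and $\textbf{ifunc}(Y,X)(-k)=\textbf{ifunc}(X,Y)(k)$ (both proved by the swap $(a,b)\mapsto(b,a)$) to rewrite these as $\textbf{iv}(A_+)(k)+\textbf{iv}(A_-)(k)$ and $2\,\textbf{ifunc}(A_+,A_-)(k)$ respectively. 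Doing the same for $\mathcal{B}$, the equality ${}^r\textbf{iv}(\mathcal{A})={}^r\textbf{iv}(\mathcal{B})$ decouples into the two equations in (\ref{equrightcond}), and the factor $2$ is harmless.

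For the left case I would repeat the same case analysis with $^l\textbf{int}((k_1,\epsilon_1),(k_2,\epsilon_2))=(k_2-(\epsilon_2/\epsilon_1)k_1,\epsilon_2/\epsilon_1)$. The sign $+1$ component is unchanged and again equals $\textbf{iv}(A_+)(k)+\textbf{iv}(A_-)(k)$. The new feature is in the sign $-1$ component: when $\epsilon_1\neq\epsilon_2$ the first coordinate of the interval is $k_2+k_1$, not $k_2-k_1$. Rewriting $a+b=k$ as $b-(-a)=k$ converts a count over $A_+\times A_-$ with fixed sum into $\textbf{ifunc}(I_0A_+,A_-)(k)$, and the symmetric contribution from $\mathcal{A}_-\times\mathcal{A}_+$ adds the same quantity again by the swap argument. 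This yields ${}^l\textbf{iv}(\mathcal{A})((k,-1))=2\,\textbf{ifunc}(I_0A_+,A_-)(k)$, and the equivalence with (\ref{equleftcond}) follows as before.

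The only delicate point, and really the only place where one can trip, is the bookkeeping on $(a,b)\mapsto(b,a)$: one must check that the $(+,-)$ and $(-,+)$ boxes genuinely contribute the same count and do not create a sign or an offset that would obstruct the factor of $2$. Everything else is routine unpacking of definitions, so the proof is essentially a clean four-way case distinction followed by two symmetry identities.
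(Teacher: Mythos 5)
Your argument is correct and follows essentially the same route as the paper's proof: a four-way case split on the signs $(\epsilon_1,\epsilon_2)$, identifying each box's contribution to the interval vector with \textbf{iv} or \textbf{ifunc} via the explicit formulas for $^r\textbf{int}$ and $^l\textbf{int}$. Your version is in fact slightly more careful than the paper's, since you make explicit the symmetries $\textbf{iv}(X)(-k)=\textbf{iv}(X)(k)$ and $\textbf{ifunc}(Y,X)(-k)=\textbf{ifunc}(X,Y)(k)$ and the resulting harmless factor of $2$, which the paper leaves implicit.
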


\begin{proof}
Let $\mathcal{A},\mathcal{B}$ be two right homometric sets in $D_{n}$. Let us recall that 
\begin{equation}
^r\textbf{int}((k_1, \epsilon_1),(k_2, \epsilon_2))=((k_2-k_1)/\epsilon_1, \epsilon_2/\epsilon_1)
\end{equation}
for $(k_1, \epsilon_1)$ and $(k_2, \epsilon_2)$ in $\mathcal{A}$. We then have to consider two cases, corresponding to the two equations of (\ref{equrightcond}).

In the first case, $\epsilon_2/\epsilon_1=1$, i.e. $\epsilon_1=\epsilon_2$. Then,
\begin{itemize}
\item{either $\epsilon_1=1=\epsilon_2$ in which case we have $^r\textbf{int}((k_1, \epsilon_1),(k_2, \epsilon_2))=(k_2-k_1,1)$
for $k_1$ and $k_2$ in $A_+$, meaning that we have to calculate \textbf{iv}($A_+$) to obtain all the intervals of that type, or}
\item{$\epsilon_1=-1=\epsilon_2$ then $^r\textbf{int}((k_1, \epsilon_1),(k_2, \epsilon_2))=(k_1-k_2,1)$ for $k_1, k_2$ in $A_-$, meaning that we have to calculate \textbf{iv}($A_-$) to obtain all the intervals of that type.}
\end{itemize}
We then must have $\textbf{iv}(A_+) + \textbf{iv}(A_-) = \textbf{iv}(B_+) + \textbf{iv}(B_-)$.

In the second case, $\epsilon_2/\epsilon_1=-1$, i.e. $\epsilon_1=-\epsilon_2$. Then,
\begin{itemize}
\item{either $\epsilon_1=1,\epsilon_2=-1$, thus we have $^r\textbf{int}((k_1, \epsilon_1),(k_2, \epsilon_2)) =(k_2-k_1,-1)$ for $k_1 \in A_+$ and $k_2 \in A_-$, meaning that we have to calculate \textbf{ifunc}($A_+,A_-$) to obtain all the intervals of that type, or}
\item{$\epsilon_1=-1,\epsilon_2=1$, then $^r\textbf{int}((k_1, \epsilon_1),(k_2, \epsilon_2))=(k_1-k_2,1)$ for $k_1 \in A_-$ and $k_2 \in A_+$, meaning that we have to calculate again \textbf{ifunc}($A_+,A_-$) to obtain all the intervals of that type.}
\end{itemize} 
We then must have $\textbf{ifunc}(A_+,A_-)=\textbf{ifunc}(B_+,B_-)$, which leads to the two equations of (\ref{equrightcond}). Reciprocally if two sets verify (\ref{equrightcond}), then they have the same right interval vector.
This works similarly with left intervals, the difference being that when $\epsilon_2/\epsilon_1=-1$, we obtain $^l\textbf{int}((k_1, \epsilon_1),(k_2, \epsilon_2))=(k_1+k_2,-1)$, and we thus calculate \textbf{ifunc}($I_0A_+,A_-$). \hfill \qed
\end{proof}

We can notice that the first equations in both (\ref{equrightcond}) and (\ref{equleftcond}) are identical, but the second one shows an important difference. For left homometry it is symmetric between $A_+$ and $A_-$ (and between $B_+$ and $B_-$), whereas it is not for right homometry, due to the fact that \textbf{ifunc}($I_0A_+,A_-$)=\textbf{ifunc}($I_0A_-,A_+$).

We give now the triviality conditions for homometric sets in $D_{n}$. It has no immediate application but it will be useful later.

\begin{proposition}
\label{proptriv}
Two sets $ \mathcal{A}$ and $ \mathcal{B}$ in $D_{n}$ are:
\begin{list}{-}{}
\item trivially right-homometric if and only if there exists $p\in \mathbb{Z}_{n}$ such that
\begin{equation}
\label{righttriv}
\left( T_pA_+=B_+ \mbox{ and } T_pA_-=B_-\right) \mbox{ or } (I_pA_+=B_- \mbox{ and } I_pA_-=B_+);
\end{equation}
\item trivially left-homometric if and only if there exists $p\in \mathbb{Z}_{n}$ such that
\begin{equation}
\label{lefttriv}
(T_pA_+=B_+ \mbox{ and } T_{-p}A_-=B_-) \mbox{ or } (T_pA_+=B_- \mbox{ and } T_{-p}A_-=B_+).
\end{equation}
\end{list}
\end{proposition}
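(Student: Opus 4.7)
The plan is to unpack the definition of triviality: two sets are trivially right-homometric precisely when they are related by some left translation $(p,\mu)\in D_n$ (since left actions preserve right intervals, and any such relation forces equality of right interval vectors), and similarly they are trivially left-homometric precisely when they are related by some right translation. Thus the proposition reduces to translating the condition $\mathcal{B}=(p,\mu)\mathcal{A}$ (resp.\ $\mathcal{B}=\mathcal{A}(p,\mu)$) into statements about $A_{\pm}$ and $B_{\pm}$ in $\mathbb{Z}_n$, using the explicit multiplication law~(\ref{equ_mult}).

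For the right-homometry case, I split on $\mu$. Applying $(p,1)(k,\epsilon)=(p+k,\epsilon)$ fixes signs and acts by $T_p$ on the first coordinate, so $(p,1)\mathcal{A}=\mathcal{B}$ is equivalent to $T_pA_+=B_+$ and $T_pA_-=B_-$. Applying $(p,-1)(k,\epsilon)=(p-\epsilon k,-\epsilon)$ swaps the two sign classes and acts by $I_p$ on the first coordinate, so $(p,-1)\mathcal{A}=\mathcal{B}$ is equivalent to $I_pA_+=B_-$ and $I_pA_-=B_+$. Taking the disjunction over $\mu=\pm 1$ yields~(\ref{righttriv}); conversely, given either of those two alternatives, the corresponding element $(p,\pm 1)$ witnesses the triviality.

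The left-homometry case is parallel, using $(k,\epsilon)(p,\mu)=(k+\epsilon p,\epsilon\mu)$. If $\mu=1$, sign is preserved and the first coordinate of $(k,1)$ becomes $k+p$ while that of $(k,-1)$ becomes $k-p$, so the projected action is $T_p$ on $A_+$ and $T_{-p}$ on $A_-$, giving $T_pA_+=B_+$ and $T_{-p}A_-=B_-$. If $\mu=-1$, signs swap and the same computation shows $T_pA_+=B_-$ and $T_{-p}A_-=B_+$. Taking the disjunction gives~(\ref{lefttriv}), and again the reverse direction is immediate by exhibiting the relevant element $(p,\pm 1)$.

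The only real care needed is bookkeeping: tracking which projected subset lands in the $+$ versus the $-$ half of the target after the sign $\mu$ is applied, and distinguishing the asymmetric appearance of $T_p$ and $T_{-p}$ that arises from multiplying on the right (where the factor $\epsilon$ in $(k,\epsilon)(p,\mu)$ negates $p$ on the $\epsilon=-1$ elements) versus the uniform action of $T_p$ on both halves that arises from multiplying on the left. No further obstacle is present; the result follows directly from the multiplication rule and the definitions of $\mathcal{A}_\pm$, $A_\pm$.
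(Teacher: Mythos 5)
Your proof is correct and follows essentially the same route as the paper's: unpack triviality as being linked by a left (resp.\ right) translation, split on the sign $\mu=\pm1$ of the translating element, and read off the induced action on the projections $A_\pm$. One small slip worth fixing: by Eq.~(\ref{equ_mult}) one has $(p,-1)(k,\epsilon)=(p-k,-\epsilon)$, not $(p-\epsilon k,-\epsilon)$; the conclusion you draw ($I_p$ applied to both halves, with the sign classes swapped) is the one that follows from the correct formula, so nothing downstream is affected.
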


\begin{proof}
We study each case, beginning with right homometry. 

If $\mathcal{A}$ and $\mathcal{B}$ are trivially right-homometric we have two cases: $(p,1)\mathcal{A}=\mathcal{B}$ or $(p,-1)\mathcal{A}=\mathcal{B}$. 
\begin{list}{-}{}
\item If we have $(p,1)\mathcal{A}=\mathcal{B}$, reminding that $(p,1)(a,1)=(p+a,1)$ and that $(p,1)(a,-1)=(p+a,-1)$ we have
\begin{align*}
(p,1)\mathcal{A}=\mathcal{B} &\Longleftrightarrow (p,1)\mathcal{A}_+=\mathcal{B}_+ \mbox{ and } (p,1)\mathcal{A}_-=\mathcal{B}_-\\
                              &\Longleftrightarrow T_pA_+=B_+ \mbox{ and } T_pA_-=B_-.
\end{align*}
\item If we have $(p,-1)\mathcal{A}=\mathcal{B}$, reminding that $(p,-1)(a,1)=(p-a,-1)$ and that $(p,-1)(a,-1)=(p-a,1)$ we have
\begin{align*}
(p,-1)\mathcal{A}=\mathcal{B} &\Longleftrightarrow (p,-1)\mathcal{A}_+=\mathcal{B}_- \mbox{ and } (p,-1)\mathcal{A}_-=\mathcal{B}_+\\
                               &\Longleftrightarrow I_pA_+=B_- \mbox{ and } I_pA_-=B_+.
\end{align*}
\end{list}

If $\mathcal{A}$ and $\mathcal{B}$ are trivially left-homometric we have two cases: $\mathcal{A}(p,1)=\mathcal{B}$ or $\mathcal{A}(p,-1).=\mathcal{B}$. 
\begin{list}{-}{}
\item If we have $\mathcal{A}(p,1)=\mathcal{B}$, reminding that $(a,1)(p,1)=(a+p,1)$ and that $(a,-1)(p,1)=(a-p,-1)$ we have
\begin{align*}
\mathcal{A}(p,1)=\mathcal{B} &\Longleftrightarrow \mathcal{A}_+(p,1)=\mathcal{B}_+ \mbox{ and } \mathcal{A}_-(p,1)=\mathcal{B}_-\\
                              &\Longleftrightarrow T_pA_+=B_+ \mbox{ and } T_{-p}A_-=B_-.
\end{align*}
\item If we have $\mathcal{A}(p,-1)=\mathcal{B}$, reminding that $(a,1)(p,-1)=(a+p,-1)$ and that $(a,-1)(p,-1)=(a-p,1)$ we have
\begin{align*}
\mathcal{A}(p,-1)=\mathcal{B} &\Longleftrightarrow \mathcal{A}_+(p,-1)=\mathcal{B}_- \mbox{ and } \mathcal{A}_-(p,-1)=\mathcal{B}_+\\
                               &\Longleftrightarrow T_pA_+=B_- \mbox{ and } T_{-p}A_-=B_+.
\end{align*}
\end{list}
It covers all the cases.
\qed
\end{proof}

Let us denote by $I$ the inversion operator in $D_{n}$. As $(k,1)^{-1}=(-k,1)$ and $(k,-1)^{-1}=(k,-1)$ for $k\in \mathbb{Z}_{n}$, it is easy to calculate $I( \mathcal{A})$ for a set $ \mathcal{A}$ in $D_{n}$: we just have to take the inverse of $A_+$ and keep $A_-$ unchanged. For example for $\mathcal{A}=\lbrace (0,-1), (1,1), (3,1), (4,-1), (8,-1) \rbrace \in D_{12}$, we obtain 
$$
I(\mathcal{A})= \lbrace (0,-1), (11,1), (9,1), (4,-1), (8,-1) \rbrace.
$$
Using the inversion we can switch from right-homometric sets to left-homometric sets and reciprocally, which is very practical. It is describred in the following theorem.

\begin{theorem}
\label{thmtau}
Let $\mathcal{A}$ and $\mathcal{B}$ be two sets in $D_{n}$. $\mathcal{A}$ and $\mathcal{B}$ are non-trivially right-homometric if and only if $I(\mathcal{A})$ and $I(\mathcal{B})$ are non-trivially left-homometric.
\end{theorem}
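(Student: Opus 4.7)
The natural plan is to reduce both sides of the equivalence to the algebraic conditions on the projections $A_+, A_-, B_+, B_-$ given by Theorem \ref{thmcond} and Proposition \ref{proptriv}, and then verify that the inversion $I$ interchanges the right system with the left system at both levels (homometry and triviality).

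First I would record how $I$ acts on the decomposition $\mathcal{A} = \mathcal{A}_+ \sqcup \mathcal{A}_-$. Since $(k,1)^{-1}=(-k,1)$ and $(k,-1)^{-1}=(k,-1)$, if we write $\mathcal{A}' = I(\mathcal{A})$ then $A'_+ = I_0 A_+$ and $A'_- = A_-$, and similarly for $\mathcal{B}'$. Now I would apply Theorem \ref{thmcond}: right-homometry of $(\mathcal{A},\mathcal{B})$ is equivalent to the pair of equations $\textbf{iv}(A_+)+\textbf{iv}(A_-)=\textbf{iv}(B_+)+\textbf{iv}(B_-)$ and $\textbf{ifunc}(A_+,A_-)=\textbf{ifunc}(B_+,B_-)$; while left-homometry of $(\mathcal{A}',\mathcal{B}')$ reads $\textbf{iv}(I_0 A_+)+\textbf{iv}(A_-)=\textbf{iv}(I_0 B_+)+\textbf{iv}(B_-)$ and $\textbf{ifunc}(I_0 I_0 A_+,A_-)=\textbf{ifunc}(I_0 I_0 B_+,B_-)$. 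Using the classical fact that $\textbf{iv}(I_0 X)=\textbf{iv}(X)$ in $\mathbb{Z}_n$, together with $I_0 \circ I_0 = \mathrm{id}$, these two systems coincide term by term, which proves the equivalence at the level of (possibly trivial) homometry.

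It remains to check that trivial right-homometry of $(\mathcal{A},\mathcal{B})$ corresponds exactly to trivial left-homometry of $(I(\mathcal{A}),I(\mathcal{B}))$. This is where I expect the only real bookkeeping. Using Proposition \ref{proptriv}, trivial right-homometry gives either $T_p A_+ = B_+$ and $T_p A_- = B_-$, or $I_p A_+ = B_-$ and $I_p A_- = B_+$. For the images under $I$, trivial left-homometry reads either $T_q(I_0 A_+) = I_0 B_+$ and $T_{-q} A_- = B_-$, or $T_q(I_0 A_+) = B_-$ and $T_{-q} A_- = I_0 B_+$. Using $T_q \circ I_0 = I_0 \circ T_{-q}$ and unwinding, the first possibility is equivalent to $T_{-q}A_+=B_+$ and $T_{-q}A_-=B_-$ (so take $p=-q$), and the second is equivalent to $I_q A_+ = B_-$ and $I_q A_- = B_+$ (so take $p=q$). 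This shows the triviality classes correspond bijectively.

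Combining the two steps, $(\mathcal{A},\mathcal{B})$ is non-trivially right-homometric if and only if $(I(\mathcal{A}),I(\mathcal{B}))$ is non-trivially left-homometric. The main obstacle is purely bookkeeping: being careful with the sign of $p$ when moving $T_p$ through $I_0$, and with the swap $A_+ \leftrightarrow A_-$ that appears in the inversion case of Proposition \ref{proptriv}; once those are tracked, the proof reduces to pattern-matching of equations.
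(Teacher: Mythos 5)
Your proof is correct, but it takes a genuinely different route from the paper's. The paper proves the homometry part by a direct group-theoretic computation in $D_n$: writing $^r\textbf{int}(a,a')=a^{-1}a'$ and $^l\textbf{int}(a,a')=a'a^{-1}$, it uses the anti-automorphism property of group inversion to show $^r\textbf{iv}(\mathcal{A})(g)={}^l\textbf{iv}(I(\mathcal{A}))(g)$ directly (via the symmetry $\textbf{iv}(\mathcal{A})(g)=\textbf{iv}(\mathcal{A})(g^{-1})$), and it handles triviality in one line from $I(\mathcal{A}g)=g^{-1}I(\mathcal{A})$. You instead descend to $\mathbb{Z}_n$ via Theorem \ref{thmcond} and Proposition \ref{proptriv}, note that $I$ sends $(A_+,A_-)$ to $(I_0A_+,A_-)$, and match the two systems of equations term by term; your bookkeeping (the fact $\textbf{iv}(I_0X)=\textbf{iv}(X)$, the identity $T_q\circ I_0=I_0\circ T_{-q}$, and the case analysis with $p=-q$ resp. $p=q$) all checks out. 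The paper's argument is shorter and more conceptual — it would work verbatim for the left/right actions of any group on itself, with no reference to the semidirect-product structure — whereas your argument is specific to $D_n$ but has the virtue of making the correspondence completely explicit at the level of the $\mathbb{Z}_n$-data $(A_+,A_-)$, which is the level at which the rest of the paper (lifts, Fourier conditions) actually operates; it also serves as a consistency check between Theorem \ref{thmcond} and Proposition \ref{proptriv}.
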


\begin{proof}
Let $\mathcal{A}$ and $\mathcal{B}$ be two right homometric sets. It means $^r\text{iv}(A)=^r\text{iv}(B)$. First we remark that
\begin{align*}
^r\textbf{int}((k_1, \epsilon_1),(k_2, \epsilon_2))&=(k_1,\epsilon_1)^{-1}(k_2,\epsilon_2)\\
^l\textbf{int}((k_1, \epsilon_1),(k_2, \epsilon_2))&=(k_2,\epsilon_2)(k_1,\epsilon_1)^{-1}
\end{align*}
We will then use a more general writing: an element in $D_{n}$ will be denoted by a single letter $a=(k, \epsilon)$. Thus $^r\textbf{int}(a,a')=a^{-1}a'$.
Then if we fix an element $g \in D_{n}$, we have
\begin{align*}
^r\textbf{iv}(\mathcal{A})(g)&=\lbrace a\in A \mid \exists a'\in A, ^r\textbf{int}(a, a') = g^{-1}\rbrace \mbox{ (we use }\textbf{iv}(A)(g)=\textbf{iv}(A)(g^{-1}))\\
                  &= \lbrace a\in A \mid \exists a'\in A, ^r\textbf{int}(a, a')^{-1} = g\rbrace \\
                  &=\lbrace a\in A \mid \exists a'\in A, ^l\textbf{int}(a^{-1}, a'^{-1})= g \rbrace  \\
                  &= ^l\textbf{iv}(I(\mathcal{A}))(g).
\end{align*}
Consequently $^l\textbf{iv}(I(\mathcal{A}))=^l\textbf{iv}(I(\mathcal{B}))$ i.e. $I(\mathcal{A})$ and $I(\mathcal{B})$ are left-homometric. It works similarly if $\mathcal{A}$ and $\mathcal{B}$ are left-homometric.

Besides, if $\mathcal{A}$ and $\mathcal{B}$ are right translated, there exists $g$ in $D_{n}$ such that $Ag=B$. We conclude immediately that $g^{-1}I(\mathcal{A})=I(\mathcal{B})$ i.e. $I(\mathcal{A})$ and $I(\mathcal{B})$ are left translated one from the other. It works similarly if $\mathcal{A}$ and $\mathcal{B}$ are left translated one from the other. Then we conclude the result concerning the triviality.
\qed
\end{proof}

\begin{corollary}
For all $n\in \mathbb{N}$, the number of right homometric sets in $ D_{n}$ is equal to the number of left homometric sets. Besides, we can deduce all the left homometric sets from the right homometric sets (and reciprocally) with the inversion $I$.
\end{corollary}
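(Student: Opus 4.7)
The plan is to view the corollary as an immediate consequence of Theorem \ref{thmtau}, with the inversion operator $I$ serving as the explicit bijection between right-homometric configurations and left-homometric ones.

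First I would observe that $I\colon \mathcal{P}(D_n) \to \mathcal{P}(D_n)$ is an involution: since $(k,1)^{-1}=(-k,1)$ and $(k,-1)^{-1}=(k,-1)$, applying $I$ twice restores every element, so $I\circ I = \mathrm{id}$ on the power set of $D_n$. In particular $I$ is a bijection on the collection of subsets of $D_n$, hence also on the collection of unordered pairs of subsets.

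Next I would use Theorem \ref{thmtau} in both directions. Let $\mathcal{R}$ denote the set of unordered pairs $\{\mathcal{A},\mathcal{B}\}$ of non-trivially right-homometric subsets of $D_n$, and let $\mathcal{L}$ denote the corresponding set of non-trivially left-homometric pairs. Theorem \ref{thmtau} states precisely that the map
\begin{equation*}
\Phi\colon \{\mathcal{A},\mathcal{B}\} \longmapsto \{I(\mathcal{A}),I(\mathcal{B})\}
\end{equation*}
sends $\mathcal{R}$ into $\mathcal{L}$. Applying the same theorem to $I(\mathcal{A})$ and $I(\mathcal{B})$ (using the reciprocal direction proved there), together with $I\circ I = \mathrm{id}$, shows that $\Phi$ also sends $\mathcal{L}$ into $\mathcal{R}$ and that $\Phi\circ\Phi$ is the identity on both. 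Hence $\Phi$ is a bijection $\mathcal{R}\to\mathcal{L}$, which gives $|\mathcal{R}|=|\mathcal{L}|$ and moreover an explicit procedure for recovering each left-homometric pair from a right-homometric one and vice versa, simply by applying $I$ componentwise.

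There is essentially no obstacle here beyond bookkeeping: the substance of the corollary is already contained in Theorem \ref{thmtau}, and the only thing to check is that $I$ is an involution so that the induced map on pairs is genuinely a bijection rather than just a surjection. The only subtlety worth flagging is to make sure we are consistent about what is being counted—pairs of sets rather than individual sets—so that "number of right homometric sets" is interpreted as the cardinality of $\mathcal{R}$ (or equivalently, by summing over equivalence classes, of the set of subsets involved in some non-trivial right-homometric pair), in which case the same bijection $\Phi$ yields the equality of counts.
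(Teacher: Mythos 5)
Your argument is correct and follows the same route as the paper, which states this as an immediate consequence of Theorem \ref{thmtau} without further proof: the inversion $I$ is an involution, so by the equivalence in that theorem it induces a bijection between non-trivially right-homometric and non-trivially left-homometric configurations. Your explicit check that $I\circ I=\mathrm{id}$ and the remark about counting pairs are reasonable fillings-in of details the paper leaves implicit.
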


This result is useful when we deal with the problem of enumeration of homometric sets in $ D_{n}$ (which is an open problem as in $ \mathbb{Z}_{n}$, cf. \cite{Jed}) since we only have to do the calculation for right (or left) homometric sets and not both of them. It also shows a kind of symmetry between left and right homometry, but in fact they work very differently concerning a specific point given in the following proposition, which is simple but important for the issues we consider after.
\begin{proposition}
\label{propproj}
If $\mathcal{A}$ and $\mathcal{B}$ are right-homometric in $D_{n}$, then their first projections $A=\pi_1 (\mathcal{A})$ and $B=\pi_1 (\mathcal{B})$ are homometric in $\mathbb{Z}_{n}$. Besides, if the homometry is trivial in $D_{n}$, the homometry is also trivial between the projections in $\mathbb{Z}_{n}$.
\end{proposition}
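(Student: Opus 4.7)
The plan is to extract the conclusion directly from the characterization of right-homometry in Theorem \ref{thmcond}. Since $\mathcal{A}$ and $\mathcal{B}$ are right-homometric, the theorem gives us both
$\textbf{iv}(A_+) + \textbf{iv}(A_-) = \textbf{iv}(B_+) + \textbf{iv}(B_-)$
and $\textbf{ifunc}(A_+, A_-) = \textbf{ifunc}(B_+, B_-)$. The goal for the first part is to combine these two identities into the single equality $\textbf{iv}(A) = \textbf{iv}(B)$, where $A = A_+ \cup A_-$ and analogously for $B$.

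The key step is to expand $\textbf{iv}(A)(k)$ by partitioning the pairs $(a_1, a_2) \in A^2$ according to which of $A_+$, $A_-$ each coordinate lies in, which yields
$$\textbf{iv}(A)(k) = \textbf{iv}(A_+)(k) + \textbf{iv}(A_-)(k) + \textbf{ifunc}(A_+, A_-)(k) + \textbf{ifunc}(A_-, A_+)(k),$$
and the analogous formula for $B$. The first two summands match by the first hypothesis. For the cross terms, I would use the elementary symmetry $\textbf{ifunc}(A_-, A_+)(k) = \textbf{ifunc}(A_+, A_-)(-k)$, obtained by swapping the two arguments of \textbf{ifunc}; combined with the second hypothesis, this forces $\textbf{ifunc}(A_-, A_+) = \textbf{ifunc}(B_-, B_+)$ as well. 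Summing all four contributions then gives $\textbf{iv}(A) = \textbf{iv}(B)$, which is exactly homometry in $\mathbb{Z}_n$.

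For the triviality statement, I would appeal to Proposition \ref{proptriv}. Trivial right-homometry in $D_n$ falls into two cases: either there exists $p \in \mathbb{Z}_n$ with $T_p A_+ = B_+$ and $T_p A_- = B_-$, or there exists $p$ with $I_p A_+ = B_-$ and $I_p A_- = B_+$. Taking unions in each case yields $T_p A = B$ in the first and $I_p A = B$ in the second, so $A$ and $B$ are related by a transposition or an inversion in $\mathbb{Z}_n$ and are thus trivially homometric there. The one point that requires a bit of care is the decomposition formula of the second paragraph: it is cleanest when $A$ is read as the multiset sum $A_+ + A_-$, and more generally the overcounting on $A_+ \cap A_-$ is symmetric between $\mathcal{A}$ and $\mathcal{B}$, so the equality still transfers. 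This bookkeeping is really the only obstacle; both halves of the proposition are otherwise essentially immediate consequences of the two structural results already proved.
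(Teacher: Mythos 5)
Your proof is correct and follows essentially the same route as the paper: the same four-term decomposition of $\textbf{iv}(A)$ via Theorem \ref{thmcond} (you make explicit the symmetry $\textbf{ifunc}(A_-,A_+)(k)=\textbf{ifunc}(A_+,A_-)(-k)$, which the paper leaves implicit), and the same case analysis via Proposition \ref{proptriv} for triviality. Your closing caveat about $A_+\cap A_-$ is a real subtlety that the paper silently ignores; your "symmetric overcounting" remark is not fully justified as stated, but since the paper tacitly assumes the projection behaves as a disjoint union, this does not separate your argument from theirs.
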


\begin{proof}
If $\mathcal{A}$ and $\mathcal{B}$ are right-homometric in $D_{n}$, we have
\begin{align*}
\textbf{iv}(A)&= \textbf{iv}(A_+) + \textbf{iv}(A_-) + \textbf{ifunc}(A_+,A_-) + \textbf{ifunc}(A_-,A_+) \\
              &= \textbf{iv}(B_+) + \textbf{iv}(B_-) + \textbf{ifunc}(B_+,B_-) + \textbf{ifunc}(B_-,B_+)\\
              &=\textbf{iv}(B).
\end{align*} 
Then $A$ and $B$ are homometric in $\mathbb{Z}_{n}$. 

If $\mathcal{A}$ and $\mathcal{B}$ are trivially right-homometric we have several cases as usual (cf. Prop. \ref{proptriv}). If 
$$
T_pA_+=B_+ \mbox{ and } T_{p}A_-=B_-
$$
we obtain
$$
T_p(A_+ \cup A_-)=B_+ \cup B_- \Longrightarrow T_pA=B,
$$
then $A$ and $B$ are trivially homometric in $\mathbb{Z}_{n}$. For the second case $(p,-1)\mathcal{A}=\mathcal{B}$ we obtain $I_pA=B$ then $A$ and $B$ are also trivially homometric in $\mathbb{Z}_{n}$.
\qed
\end{proof}

In other words, homometry for the right action in $ D_{n}$ "implies" homometry in $ \mathbb{Z}_{n}$. However left homometry does not, as the pair of sets $$(\{ (0, 1), (1, -1), (2, 1), (5, -1), (7, -1)\},\{ (0, 1), (1, -1), (6, 1), (7, -1), (8, 1)\})$$ in $D_{10}$ shows. These sets are left homometric but their projection $\{ 0,1, 2,5,7 \}$ and $\{ 0,1,6,7,8\}$ are not homometric in $\mathbb{Z}_{10}$. The Proposition \ref{propproj} raises the question of whether left- or right-homometric sets in $D_{n}$ can be found from homometric sets in $\mathbb{Z}_{n}$. In other words, can we split two homometric sets $A$ and $B$ in $\mathbb{Z}_{n}$ into subsets $(A_+,A_-)$ and $(B_+, B_-)$ such that the corresponding sets in $D_{n}$ are homometric? This question will be considered in the following section with the definition of the concept of \textit{lift}. 

Before moving to this section we give some computational results concerning the enumeration of homometric sets in $D_{n}$. By a brute-force approach, a complete enumeration of such sets was performed, with cardinality equal to 4, 5,  6 or 7 for $n \leq 18$. The result is summarized in Tab. \ref{tabenum}. The first homometric pair appears for $n=8$, $p=4$ (a direct result of Proposition \ref{propproj} and known results about homometry in $\mathbb{Z}_{n}$). Homometric $t$-uples with $t>2$ also exist, the first triple appearing for $n=12$ and $p=5$ (interestingly, the first homometric triple in $\mathbb{Z}_{n}$ only appears for $n=16$ and $p=6$). The first simultaneously right- and left- homometric pair appears for $n=8$ and $p=4$. 

Table \ref{tabmusicalform} gives a complete list of left and right homometric pairs and triples written in musical form for $n=12$ with $p=4$ and $p=5$. Notice that the first two pairs with $p=5$ in this are both left- and right-homometric.

\begin{table}
\caption{Left and right homometric sets in $D_{12}$ in musical form.}
\label{tabmusicalform}
\begin{center}
\begin{tabular}{|c|c|c|c|}
 \hline 
  $N=12$  & Type  & Homometric sets for the action  & Homometric sets  for the action  \tabularnewline
  &  & of the $T/I$-group (left action) & of the $PLR$-group (right action) \\
  \hline 
  $p=4$ & Pairs & $\lbrace C,d,e^{\flat},G^{\flat} \rbrace \&  \lbrace C,c,g^{\flat},A \rbrace$ & $\lbrace C,c,e^{\flat},G^{\flat} \rbrace \&  \lbrace C,c,E^{\flat},g^{\flat} \rbrace$ \\
      &  & $\lbrace C,d^{\flat},e,G^{\flat} \rbrace \&  \lbrace C,d^{\flat},g,A \rbrace$ & $\lbrace C,d^{\flat},e,G^{\flat} \rbrace \&  \lbrace C,d^{\flat},E^{\flat},g \rbrace$ \\
      &  & $\lbrace C,d,f,G^{\flat} \rbrace \&  \lbrace C,d,a^{\flat},A \rbrace$ & $\lbrace C,d,f,G^{\flat} \rbrace \&  \lbrace C,d,E^{\flat},a^{\flat} \rbrace$ \\
  \hline
  $p=5$ & Pairs & $\lbrace C,c,d,E,A^{\flat} \rbrace \&  \lbrace C,d,e,E,A^{\flat} \rbrace$ & $\lbrace C,c,d,E,A^{\flat} \rbrace \&  \lbrace C,d,e,E,A^{\flat} \rbrace$ \\
  &  & $\lbrace C,d^{\flat},e^{\flat},E,A^{\flat} \rbrace \&  \lbrace C,e^{\flat},E,f,A^{\flat} \rbrace$ & $\lbrace C,d^{\flat},e^{\flat},E,A^{\flat} \rbrace \&  \lbrace C,e^{\flat},E,f,A^{\flat} \rbrace$ \\
  & &$\lbrace C,c,d^{\flat},f,G^{\flat} \rbrace \&  \lbrace C,c,g^{\flat},G,B \rbrace$ & $\lbrace C,c,d^{\flat},f,G^{\flat} \rbrace \&  \lbrace C,c,D^{\flat},F,g^{\flat} \rbrace$ \\
  & &$\lbrace C,c,e^{\flat},f,G^{\flat} \rbrace \&  \lbrace C,c,E^{\flat},g^{\flat},B \rbrace$ & $\lbrace C,c,e,f,G^{\flat} \rbrace \&  \lbrace C,c,D^{\flat},g^{\flat},A^{\flat} \rbrace$ \\
  & &$\lbrace C,c,D^{\flat},g^{\flat},A^{\flat} \rbrace \&  \lbrace C,c,g^{\flat},G,A^{\flat} \rbrace$ & $\lbrace C,c,E,F,g^{\flat} \rbrace \&  \lbrace C,c,E,g^{\flat},B \rbrace$ \\
  & &$\lbrace C,d^{\flat},D^{\flat},g,A^{\flat} \rbrace \&  \lbrace C,d^{\flat},g,G,A^{\flat} \rbrace$ &  $\lbrace C,d^{\flat},E,F,g \rbrace \&  \lbrace C,d^{\flat},E,g,B \rbrace$\\
  & &$\lbrace C,D^{\flat},d,a^{\flat},A^{\flat} \rbrace \&  \lbrace C,d,G,a^{\flat},A^{\flat} \rbrace$ & $\lbrace C,d,E,F,a^{\flat} \rbrace \&  \lbrace C,d,E,a^{\flat},B \rbrace$ \\
  & &$\lbrace C,D^{\flat},e^{\flat},A^{\flat},a \rbrace \&  \lbrace C,e^{\flat},G,A^{\flat},a \rbrace$ & $\lbrace C,e^{\flat},E,F,a \rbrace \&  \lbrace C,e^{\flat},E,a,B \rbrace$ \\
  \cline{2-4}
  & Triples & $\lbrace C,c,d,e^{\flat},G^{\flat} \rbrace \&  \lbrace C,c,D,g^{\flat},B^{\flat} \rbrace$ & $\lbrace C,c,d,e,G^{\flat} \rbrace \&  \lbrace C,c,D,E,g^{\flat} \rbrace$ \\
  & & $\&  \lbrace C,c,g^{\flat},A^{\flat},B^{\flat} \rbrace$ &  $\&  \lbrace C,c,D,g^{\flat},B^{\flat} \rbrace$ \\
  & &$\lbrace C,d^{\flat},e^{\flat},f,G^{\flat} \rbrace \&  \lbrace C,d^{\flat},D,g,B^{\flat} \rbrace$ & $\lbrace C,d^{\flat},e^{\flat},f,G^{\flat} \rbrace \&  \lbrace C,d^{\flat},D,E,g \rbrace$ \\
  & & $\&  \lbrace C,d^{\flat},g,A^{\flat},B^{\flat} \rbrace$ &  $\&  \lbrace C,d^{\flat},D,g,B^{\flat} \rbrace $\\
  \hline
\end{tabular}
\end{center}
\end{table}

\begin{table}[t!]
\caption{Table with the number of homometric pairs, triples, $t$-uples in $D_{n}$ for different values of $n$ and $p$.}
\label{tabenum}
\begin{center}
\begin{tabular}{|c|c|c|c|}
  \hline 
  Cardinality & $D_{n}$   & Homometric sets for   & Simulataneous right and   \tabularnewline
  &  & the right/ left action & left homometric sets\\
  \hline 
  $p=4$ &  $n=8$ & 2 pairs & 2 pairs \\
      \cline{2-4}
      & $n=12$ & 3 pairs & 3 pairs \\
      \cline{2-4}
      & $n=16$ & 4 pairs & 4 pairs \\
  \hline
  $p=5$ & $n=8$ & 12 pairs & 12 pairs \\
      \cline{2-4}
      & $n=10$ & 20 pairs & 20 pairs \\
      \cline{2-4}
      & $n=12$ & 8 pairs/2 triples & 8 pairs/2 triples \\
      \cline{2-4}
      & $n=14$ & 21 pairs & 21 pairs \\
      \cline{2-4}
      & $n=15$ & 15 pairs & 15 pairs \\
      \cline{2-4}
      & $n=16$ & 40 pairs & 40 pairs \\
      \cline{2-4}
      & $n=18$ & 30 pairs/3 triples & 30 pairs/3 triples \\     
  \hline 
  $p=6$ & $n=8$ & 30 pairs/3 quadruples & 30 pairs/3 quadruples \\
      \cline{2-4}
      & $n=9$ & 54 pairs/3 triples & 54 pairs/3 triples \\
      \cline{2-4}
      & $n=10$ & 70 pairs & 30 pairs \\
      \cline{2-4}
      & $n=12$ & 358 pairs & 358 pairs \\
      \cline{2-4}
      & $n=14$ & 252 pairs & 84 pairs \\
      \cline{2-4}
      & $n=15$ & 225 pairs & 225 pairs \\
      \cline{2-4}
      & $n=16$ & 500 pairs/6 quadruples & 500 pairs/6 quadruples \\
      \cline{2-4}
      & $n=18$ & 906 pairs/6 triples & 474 pairs/6 triples \\
  \hline 
  $p=7$ & $n=8$ & 36 pairs & 36 pairs \\
      \cline{2-4}
      & $n=9$ & 63 pairs & 63 pairs \\
      \cline{2-4}
      & $n=10$ & 102 pairs/3 quintuples & 82 pairs/3 quintuples \\
      \cline{2-4}
      & $n=11$ & 55 pairs & 55 pairs \\
      \cline{2-4}
      & $n=12$ & 317 pairs/11 triples/10 quadruples/  & 293 pairs/11 triples/10 quadruples/ \\
      & &  2 sextuples/1 octuple &  2 sextuples/1 octuple \\
      \cline{2-4}
      & $n=13$ & 130 pairs & 78 pairs \\
      \cline{2-4}
      & $n=14$ & 539 pairs & 497 pairs \\
      \cline{2-4}
      & $n=15$ & 405 pairs & 405 pairs \\
      \cline{2-4}
      & $n=16$ & 976 pairs & 912 pairs \\
      \cline{2-4}
      & $n=17$ & 136 pairs & 136 pairs \\
      \cline{2-4}
      & $n=18$ & 1785 pairs/30 triples/27 quadruples & 1623 pairs/30 triples/27 quadruples \\
  \hline
\end{tabular}
\end{center}
\end{table}

\section{The Concept of Lift -- Using the Fourier Transform}

We begin this section with a definition motivated by Prop. \ref{propproj}. $\mathcal{P}(E)$ corresponds to the power set of the set $E$.

\begin{definition}
We call a \textnormal{lift} an application $l: \mathcal{P}(\mathbb{Z}_{n})\longrightarrow \mathcal{P}(D_{n})$ such that $\pi \circ l=id$. We will call \textnormal{lift of a set $A\in \mathbb{Z}_{n}$ for the lift $l$}, the set $l(A)$.
\end{definition}

We could say that $l$ is a way to attribute to each number of a set $+1$ or $-1$. In more formal terms the former question is: given two homometric sets $A$ and $B$ in $\mathbb{Z}_{n}$, is there a lift $l$ such that $l(A)$ and $l(B)$ are (left/right-) homometric in $D_{n}$? 

We use the Fourier transform to express these conditions. Actually the Fourier transform is very convenient when we deal with the functions \textbf{ifunc} and \textbf{iv} for subsets in $\mathbb{Z}_{n}$, as explained in the work of Amiot (\cite{Amiot}). Let us recall that for $A$ and $B$ two subsets in $\mathbb{Z}_{n}$ we have for $t \in \mathbb{Z}_{n}$
\begin{equation}
\textbf{ifunc}(A,B)(t)=\bbbone_{-A} \star \bbbone_{B}(t)= \sum_{k \in \mathbb{Z}_{n}} \bbbone_{A}(k)\bbbone_{B}(t+k).
\end{equation}
If we apply the Fourier transform (we write $\mathcal{F}_{A}:=\mathcal{F}(\bbbone_{A}): t \mapsto \sum_{k \in A} e^{-2i \pi k t/n}$) to this convolution product, we obtain the classical result for $t\in \mathbb{Z}_{n}$
\begin{equation}
\label{equifunc}
\mathcal{F}(\textbf{ifunc}(A,B))(t)=\mathcal{F}_{-A}(t) \mathcal{F}_{B}(t).
\end{equation}
As \textbf{iv}($A$)=\textbf{ifunc}($A,A$) and $\mathcal{F}_{-A}(t)=\overline{\mathcal{F}_{A}(t)}$ we deduce from Eq. (\ref{equifunc}) that $\mathcal{F}(\textbf{iv}(A))= \vert \mathcal{F}_{A} \vert^2$ and we get the well-known characterization of homometry in $\mathbb{Z}_{n}$. For $A$ and $B$ two subsets of $\mathbb{Z}_{n}$:
\begin{equation}
\label{equcaractfour}
A \mbox{ is homometric with } B \Longleftrightarrow \vert \mathcal{F}_{A} \vert =\vert \mathcal{F}_{B} \vert.
\end{equation}

The use of the Fourier transform gives a new formulation of Thm. \ref{thmcond}.

\begin{theorem}
\label{thmcondfourier}
Two sets $\mathcal{A}$ and $\mathcal{B}$ in $D_{n}$ are homometric for the right action if and only if the two following equations hold:
\begin{eqnarray}
\label{equrightcondfourier}
& \left\{
    \begin{array}{lc}
        \vert \mathcal{F}_{A_+}\vert^2 + \vert \mathcal{F}_{A_-} \vert^2 =\vert \mathcal{F}_{B_+}\vert^2 + \vert \mathcal{F}_{B_-} \vert^2 &\\
        \overline{\mathcal{F}_{A_+}}\mathcal{F}_{A_-}=\overline{\mathcal{F}_{B_+}}\mathcal{F}_{B_-}. &
    \end{array}
\right. & 
\end{eqnarray}

Two sets $\mathcal{A}$ and $\mathcal{B}$ in $D_{n}$ are homometric for the left action if and only if the two following equations hold:
\begin{eqnarray}
\label{equleftcondfourier}
& \left\{
    \begin{array}{lc}
       \vert \mathcal{F}_{A_+}\vert^2 + \vert \mathcal{F}_{A_-} \vert^2 =\vert \mathcal{F}_{B_+}\vert^2 + \vert \mathcal{F}_{B_-} \vert^2 &\\
        \mathcal{F}_{A_+}\mathcal{F}_{A_-}=\mathcal{F}_{B_+}\mathcal{F}_{B_-}.&
    \end{array}
\right. & 
\end{eqnarray}
\end{theorem}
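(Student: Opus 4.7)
The plan is to reduce Theorem \ref{thmcondfourier} to Theorem \ref{thmcond} by applying the discrete Fourier transform to each of the conditions (\ref{equrightcond}) and (\ref{equleftcond}) term by term. Since $\mathcal{F}$ is a linear bijection on the space of functions $\mathbb{Z}_{n}\to\mathbb{C}$, two such functions coincide if and only if their Fourier transforms coincide, so the equivalences we derive will automatically be biconditional.

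For the right-action case, I would first take the Fourier transform of the equation $\textbf{iv}(A_+)+\textbf{iv}(A_-)=\textbf{iv}(B_+)+\textbf{iv}(B_-)$. Using the identity $\mathcal{F}(\textbf{iv}(X))=|\mathcal{F}_{X}|^{2}$ recalled just before the statement, this transforms exactly into $|\mathcal{F}_{A_+}|^{2}+|\mathcal{F}_{A_-}|^{2}=|\mathcal{F}_{B_+}|^{2}+|\mathcal{F}_{B_-}|^{2}$. For the second equation $\textbf{ifunc}(A_+,A_-)=\textbf{ifunc}(B_+,B_-)$, I would apply Eq. (\ref{equifunc}) together with the observation that $\mathcal{F}_{-X}=\overline{\mathcal{F}_{X}}$, which yields $\overline{\mathcal{F}_{A_+}}\mathcal{F}_{A_-}=\overline{\mathcal{F}_{B_+}}\mathcal{F}_{B_-}$, the second equation of (\ref{equrightcondfourier}).

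For the left-action case the first equation is identical, so only the second equation requires work. I would compute $\mathcal{F}(\textbf{ifunc}(I_{0}A_{+},A_{-}))$ by the same convolution identity, obtaining $\mathcal{F}_{-I_{0}A_{+}}\mathcal{F}_{A_{-}}$. The key observation is that $I_{0}A_{+}=-A_{+}$, hence $-I_{0}A_{+}=A_{+}$, so $\mathcal{F}_{-I_{0}A_{+}}=\mathcal{F}_{A_{+}}$ and the factor of complex conjugation disappears. The equation therefore becomes $\mathcal{F}_{A_{+}}\mathcal{F}_{A_{-}}=\mathcal{F}_{B_{+}}\mathcal{F}_{B_{-}}$, which is precisely the second equation of (\ref{equleftcondfourier}), accounting for the asymmetry between left and right homometry noted after Theorem \ref{thmcond}.

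I do not expect any real obstacle: the argument is a purely formal translation, and the only point that needs a one-line justification is the sign flip $I_{0}A_{+}=-A_{+}$ that explains why the left-action condition loses the conjugation present in the right-action condition. It might also be worth noting explicitly at the beginning or end that because $\mathcal{F}$ is injective, each equivalence propagates both ways, which is what turns the implication chain into the claimed "if and only if."
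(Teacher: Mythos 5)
Your proof is correct and follows exactly the route the paper intends: the paper states this theorem without a written proof, remarking only that the Fourier transform gives a reformulation of Theorem \ref{thmcond}, and your term-by-term translation via $\mathcal{F}(\textbf{iv}(X))=|\mathcal{F}_X|^2$, $\mathcal{F}(\textbf{ifunc}(X,Y))=\overline{\mathcal{F}_X}\mathcal{F}_Y$, the identity $I_0A_+=-A_+$, and the injectivity of $\mathcal{F}$ is precisely that argument, correctly carried out.
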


Recall that we want to decompose two homometric sets in $ \mathbb{Z}_{n}$ both into two subsets in order to lift them in $D_{n}$. The following proposition gives a special characterization of homometry in $\mathbb{Z}_{n}$ for such a decomposition, using the Fourier transform.

\begin{proposition}
\label{propzrelation}
Let $A$ and $B$ be two sets in $\mathbb{Z}_{n}$ such that $A=A_1\cup A_2$ and $B=B_1\cup B_2$ for some subsets $A_1$, $A_2$, $B_1$ and $B_2$ in $ \mathbb{Z}_{n}$. $A$ and $B$ are homometric if and only if
\begin{equation}
\vert \mathcal{F}_{A_1}\vert^2+\vert \mathcal{F}_{A_2}\vert^2 + 2 \mathcal{R}e(\overline{\mathcal{F}_{A_1}}\mathcal{F}_{A_2})=\vert \mathcal{F}_{B_1}\vert^2+\vert \mathcal{F}_{B_2}\vert^2 + 2 \mathcal{R}e(\overline{\mathcal{F}_{B_1}}\mathcal{F}_{B_2}).
\end{equation}
\end{proposition}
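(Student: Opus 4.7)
The plan is to start from the Fourier characterization of homometry recalled just above the proposition, namely Eq. (\ref{equcaractfour}): $A$ and $B$ are homometric in $\mathbb{Z}_{n}$ if and only if $|\mathcal{F}_{A}| = |\mathcal{F}_{B}|$, equivalently $|\mathcal{F}_{A}|^{2} = |\mathcal{F}_{B}|^{2}$ pointwise on $\mathbb{Z}_{n}$. The task then reduces to re-expressing each side using the given decompositions $A = A_{1} \cup A_{2}$ and $B = B_{1} \cup B_{2}$ (understood as disjoint unions, since otherwise the indicator functions would not split additively).

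First I would observe that, for a disjoint union $A = A_{1} \cup A_{2}$, the indicator function splits as $\bbbone_{A} = \bbbone_{A_{1}} + \bbbone_{A_{2}}$, and by linearity of the Fourier transform $\mathcal{F}_{A} = \mathcal{F}_{A_{1}} + \mathcal{F}_{A_{2}}$. Then I would expand the squared modulus:
\begin{equation*}
|\mathcal{F}_{A}|^{2} = (\mathcal{F}_{A_{1}} + \mathcal{F}_{A_{2}})\overline{(\mathcal{F}_{A_{1}} + \mathcal{F}_{A_{2}})} = |\mathcal{F}_{A_{1}}|^{2} + |\mathcal{F}_{A_{2}}|^{2} + \overline{\mathcal{F}_{A_{1}}}\mathcal{F}_{A_{2}} + \mathcal{F}_{A_{1}}\overline{\mathcal{F}_{A_{2}}},
\end{equation*}
and since the last two terms are complex conjugates of each other, their sum equals $2\mathcal{R}e(\overline{\mathcal{F}_{A_{1}}}\mathcal{F}_{A_{2}})$. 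The same identity holds for $B$ with $(B_{1}, B_{2})$ in place of $(A_{1}, A_{2})$.

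Combining these two expansions with the Fourier criterion for homometry gives the desired equivalence immediately: $|\mathcal{F}_{A}|^{2} = |\mathcal{F}_{B}|^{2}$ is, term by term in $t \in \mathbb{Z}_{n}$, exactly the displayed equation of the proposition. There is no real obstacle here; the only subtlety worth a line of commentary is the disjointness assumption on the decompositions, which is needed so that $\bbbone_{A} = \bbbone_{A_{1}} + \bbbone_{A_{2}}$ (this is consistent with the intended application, where $A_{1}, A_{2}$ will play the role of $A_{+}, A_{-}$ from the previous section and are disjoint by construction).
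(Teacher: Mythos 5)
Your proof is correct and follows essentially the same route as the paper's: both apply the Fourier characterization of homometry, Eq. (\ref{equcaractfour}), and expand $\vert \mathcal{F}_{A}\vert^2 = \vert \mathcal{F}_{A_1}+\mathcal{F}_{A_2}\vert^2$ into the three displayed terms. Your remark that the unions must be disjoint for $\bbbone_{A}=\bbbone_{A_1}+\bbbone_{A_2}$ to hold is a worthwhile precision that the paper leaves implicit.
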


\begin{proof}
We use Eq. (\ref{equcaractfour}) and the fact that 
\begin{align*}
\hspace*{1.8cm} \vert \mathcal{F}_{A} \vert^2=\vert \mathcal{F}_{A_1}+\mathcal{F}_{A_2}\vert^2=\vert \mathcal{F}_{A_1}\vert^2+\vert \mathcal{F}_{A_2}\vert^2 + 2 \mathcal{R}e(\overline{\mathcal{F}_{A_1}}\mathcal{F}_{A_2}).  \hspace*{1.45cm} \qed
\end{align*}
\end{proof}

We can now give the main result of this paper which solves the question of lift in a special case.

\begin{theorem}
\label{thmmajeur}
Let $A$ and $B$ be two homometric sets in $\mathbb{Z}_{n}$ such that $A=A_1\cup A_2$ and $B=B_1\cup B_2$ with \textbf{iv}$(A_1)$=\textbf{iv}$(B_1)$ and \textbf{iv}$(A_2)$=\textbf{iv}$(B_2)$. We can always lift $A$ and $B$ into (non trivial) right-homometric sets in $D_{n}$.
\end{theorem}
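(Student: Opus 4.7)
The plan is to check that the lift respecting the given block structure is right-homometric in $D_n$. Concretely, set
\[
\mathcal{A} := \{(a,+1): a\in A_1\}\cup\{(a,-1): a\in A_2\}, \quad \mathcal{B} := \{(b,+1): b\in B_1\}\cup\{(b,-1): b\in B_2\},
\]
so that $\mathcal{A}_{\pm}$ projects onto $A_{1,2}$ and $\mathcal{B}_{\pm}$ onto $B_{1,2}$. It then suffices to verify the two Fourier equations of Theorem~\ref{thmcondfourier}.

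The first equation, $\vert\mathcal{F}_{A_1}\vert^2+\vert\mathcal{F}_{A_2}\vert^2 = \vert\mathcal{F}_{B_1}\vert^2+\vert\mathcal{F}_{B_2}\vert^2$, is an immediate pointwise consequence of the hypotheses $\textbf{iv}(A_i)=\textbf{iv}(B_i)$, which translate via Eq.~(\ref{equcaractfour}) into $\vert\mathcal{F}_{A_i}\vert=\vert\mathcal{F}_{B_i}\vert$ for $i=1,2$.

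The core of the argument is the second equation $\overline{\mathcal{F}_{A_1}}\mathcal{F}_{A_2}=\overline{\mathcal{F}_{B_1}}\mathcal{F}_{B_2}$. Two things are already free: the moduli of the two products coincide at every $t$, since both equal $\vert\mathcal{F}_{A_1}\vert\vert\mathcal{F}_{A_2}\vert$; and applying Proposition~\ref{propzrelation} to the full hypothesis $\textbf{iv}(A)=\textbf{iv}(B)$ and cancelling the individual $\vert\mathcal{F}_{\cdot}\vert^2$ terms yields $\mathcal{R}e\bigl(\overline{\mathcal{F}_{A_1}}\mathcal{F}_{A_2}\bigr) = \mathcal{R}e\bigl(\overline{\mathcal{F}_{B_1}}\mathcal{F}_{B_2}\bigr)$. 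Hence at every $t\in\mathbb{Z}_n$ the two complex numbers $\overline{\mathcal{F}_{A_1}(t)}\mathcal{F}_{A_2}(t)$ and $\overline{\mathcal{F}_{B_1}(t)}\mathcal{F}_{B_2}(t)$ share modulus and real part, so they are either equal or complex conjugate. If they coincide, the natural lift works as is; if they are conjugate, then exchanging the two blocks on the $A$-side---i.e.\ setting $\mathcal{A}'_+:=A_2$, $\mathcal{A}'_-:=A_1$---replaces $\overline{\mathcal{F}_{A_1}}\mathcal{F}_{A_2}$ by $\overline{\mathcal{F}_{A_2}}\mathcal{F}_{A_1}=\overline{\overline{\mathcal{F}_{A_1}}\mathcal{F}_{A_2}}$, which then equals $\overline{\mathcal{F}_{B_1}}\mathcal{F}_{B_2}$.

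Non-triviality is then read off from Proposition~\ref{proptriv}: a trivial right-homometry between $\mathcal{A}$ and $\mathcal{B}$ would yield a $p\in\mathbb{Z}_n$ satisfying one of the block-wise identities there, and projecting via Proposition~\ref{propproj} would force either $T_pA=B$ or $I_pA=B$ in $\mathbb{Z}_n$, which is ruled out as soon as $A$ and $B$ are non-trivially homometric there. The step I expect to be the main obstacle is the global-versus-pointwise dichotomy in the phase analysis: one must argue that the choice ``equal'' vs.\ ``conjugate'' in the previous paragraph can be made uniformly in $t$, so that a single swap of the two $A$-blocks handles all frequencies at once rather than needing a frequency-dependent construction.
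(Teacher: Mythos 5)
Your proof takes essentially the same route as the paper's: equality of moduli of $\overline{\mathcal{F}_{A_1}}\mathcal{F}_{A_2}$ and $\overline{\mathcal{F}_{B_1}}\mathcal{F}_{B_2}$ from the block hypotheses, equality of real parts from Proposition~\ref{propzrelation}, the dichotomy $z=z'$ or $z=\overline{z'}$, a corresponding choice of signs in the lift, and non-triviality via Proposition~\ref{propproj}. The uniformity-in-$t$ obstacle you flag at the end is a fair concern, but the paper's own proof does not address it either: it silently treats the equal-versus-conjugate dichotomy as a single global case distinction rather than a pointwise one, so on this point your attempt is no less complete than the original.
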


\begin{proof}
Let $A$ and $B$ be two homometric subsets verifying the conditions of the theorem. We know from Prop. \ref{propzrelation} that
$$
\vert \mathcal{F}_{A_1}\vert^2+\vert \mathcal{F}_{A_2}\vert^2 + 2 \mathcal{R}e(\overline{\mathcal{F}_{A_1}}\mathcal{F}_{A_2})=\vert \mathcal{F}_{B_1}\vert^2+\vert \mathcal{F}_{B_2}\vert^2 + 2 \mathcal{R}e(\overline{\mathcal{F}_{B_1}}\mathcal{F}_{B_2}).
$$
As \textbf{iv}$(A_1)$=\textbf{iv}$(B_1)$ and \textbf{iv}$(A_2)$=\textbf{iv}$(B_2)$ we deduce 
\begin{equation}
\label{equegalite}
\vert \mathcal{F}_{A_1} \vert=\vert \mathcal{F}_{B_1} \vert \mbox{ and } \vert \mathcal{F}_{A_2} \vert=\vert \mathcal{F}_{B_2} \vert,
\end{equation}
so we get
$$
\mathcal{R}e(\overline{\mathcal{F}_{A_1}}\mathcal{F}_{A_2})=\mathcal{R}e(\overline{\mathcal{F}_{B_1}}\mathcal{F}_{B_2}).
$$

We remark also that $\vert \overline{\mathcal{F}_{A_1}}\mathcal{F}_{A_2} \vert= \vert \overline{\mathcal{F}_{B_1}}\mathcal{F}_{B_2} \vert$ thanks to (\ref{equegalite}). We obtain finally the two following equations:
\begin{eqnarray}
\label{condzrelation}
& \left\{
    \begin{array}{cc}
       \mathcal{R}e(\mathcal{F}_{A_1}\overline{\mathcal{F}_{A_2}})=\mathcal{R}e(\mathcal{F}_{B_1}\overline{\mathcal{F}_{B_2}}) &\\
        \vert \mathcal{F}_{A_1}\overline{\mathcal{F}_{A_2}}\vert =\vert \mathcal{F}_{B_1}\overline{\mathcal{F}_{B_2}}\vert. &
    \end{array}
\right. & 
\end{eqnarray}

These equations are of the form $\mathcal{R}e(z)=\mathcal{R}e(z')$ and $\vert z\vert = \vert z'\vert$, which implies $z=z'$ or $z=\overline{z'}$ i.e.
\begin{align*}
&\mathcal{F}_{A_1}\overline{\mathcal{F}_{A_2}}=\mathcal{F}_{B_1}\overline{\mathcal{F}_{B_2}}\\ \mbox{ or } &\mathcal{F}_{A_1}\overline{\mathcal{F}_{A_2}}=\overline{\mathcal{F}_{B_1}}\mathcal{F}_{B_2}.
\end{align*}

In the first case ($\mathcal{F}_{A_1}\overline{\mathcal{F}_{A_2}}=\mathcal{F}_{B_1}\overline{\mathcal{F}_{B_2}}$) if we choose $A_+=A_2$, $A_-=A_1$, $B_+=B_2$ and $B_-=B_1$ we get right homometric sets in the dihedral group since (\ref{equrightcondfourier}) is verified. In the second case ($\mathcal{F}_{A_1}\overline{\mathcal{F}_{A_2}}=\overline{\mathcal{F}_{B_1}}\mathcal{F}_{B_2}$) if we choose $A_+=A_2$, $A_-=A_1$, $B_+=B_1$ and $B_-=B_2$ we get also right homometric sets. Thanks to Prop. \ref{propproj} we know that this homometry is not trivial. \qed
\end{proof}

This result proves not only the existence of right homometric lifts but gives also a way to build these lifts, which is very practical. We will describe it in a concrete example. Before we give two interesting corollaries of Thm. \ref{thmmajeur}.

\begin{corollary}
\label{thm4n}
In $\mathbb{Z}_{4N}$ ($ N\geq 2$), we can always lift homometric sets with cardinality equal to 4 into right homometric sets in $D_{4N}$.
\end{corollary}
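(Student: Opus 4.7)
The plan is to verify the hypothesis of Theorem~\ref{thmmajeur} for any two homometric $4$-sets $A, B \subset \mathbb{Z}_{4N}$, i.e.\ to exhibit non-trivial decompositions $A = A_{1} \cup A_{2}$, $B = B_{1} \cup B_{2}$ with $\textbf{iv}(A_{i}) = \textbf{iv}(B_{i})$ for $i=1,2$. Theorem~\ref{thmmajeur} then supplies a right-homometric lift in $D_{4N}$, and Proposition~\ref{propproj} makes this lift non-trivial whenever $A,B$ were non-trivially homometric in $\mathbb{Z}_{4N}$. My preferred attempt is the $2+2$ decomposition: since the interval vector of a $2$-subset is entirely determined by its unique pair-difference, a $2+2$ split on each side suffices as soon as the multisets of pair-differences match. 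A $4$-set admits exactly three partitions into two pairs (one per perfect matching of $K_{4}$), and these three partitions together account for all six pair-differences of the set. Because $A$ and $B$ share the same multiset of six pair-differences by homometry, the combinatorial crux is to show that one of $A$'s three pair-splits produces the same unordered pair of differences as one of $B$'s three pair-splits.

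The principal obstacle is precisely this combinatorial matching. Two arbitrary partitions of a $6$-multiset into three pairs need not share a pair, so the argument must exploit the structure of homometric $4$-sets in $\mathbb{Z}_{4N}$. I would proceed by a case analysis on the parity profile of $A$—the number of even versus odd elements—which must coincide with that of $B$ (up to swap) because $\sum_{k}\textbf{iv}(A)(2k)$ and $\sum_{k}\textbf{iv}(A)(2k+1)$ are independent invariants of the interval vector. The hypothesis $4 \mid n$ should be essential here: the unique element $2N \in \mathbb{Z}_{4N}$ of order~$2$ corresponds to a self-inverse pair-difference that plays a distinguished role in each case. If the $2+2$ approach fails in some configuration, one can fall back on a $1+3$ decomposition, since singletons trivially have matching interval vectors; it would then suffice to exhibit $a \in A$ and $b \in B$ such that $A \setminus \{a\}$ and $B \setminus \{b\}$ are homometric $3$-sets, equivalently related by a transposition or inversion (since $3$-subsets of $\mathbb{Z}_n$ admit no non-trivial homometry).
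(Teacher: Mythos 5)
Your overall strategy is the right one --- reduce to Theorem~\ref{thmmajeur} by exhibiting a $2+2$ decomposition with matching interval vectors --- but the proposal stops exactly at the point where the actual work lies. You correctly identify the ``combinatorial crux'' (that one of $A$'s three pair-splits must match one of $B$'s), and then you only sketch how you \emph{would} attack it (parity profiles, the role of $2N$, a possible $1+3$ fallback) without carrying any of it out. As written, nothing in the proposal establishes that a matching split exists, so the hypothesis of Theorem~\ref{thmmajeur} is never verified and the proof is incomplete.

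The missing ingredient is Rosenblatt's classification of homometric tetrachords, which is how the paper closes this gap: in $\mathbb{Z}_{4N}$ every non-trivial homometric pair of $4$-sets has (up to the usual equivalences) the explicit form $A=\{0,a,a+N,2N\}$, $B=\{0,a,N,2N+a\}$ with $a\in\{1,\dots,N-1\}$. With this in hand the split is immediate: take $A_1=\{0,2N\}$, $A_2=\{a,a+N\}$, $B_1=\{a,2N+a\}$, $B_2=\{0,N\}$; then $B_1=T_aA_1$ and $B_2=T_{-a}A_2$, so $\textbf{iv}(A_i)=\textbf{iv}(B_i)$ and Theorem~\ref{thmmajeur} applies. (One can check that this is precisely the pair-split your $K_4$-matching argument would find: both sides produce the difference pair $\{N,2N\}$.) Without citing this classification you would in effect have to re-derive it, and your proposed parity case analysis gives no guarantee of success: matching parity profiles constrain the six differences but do not by themselves force two of the three perfect matchings to coincide. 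I recommend you restructure the proof to quote Rosenblatt's result first and then read off the decomposition, rather than attempting a structure-free combinatorial argument.
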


\begin{proof}
Rosenblatt (\cite{Rosenblatt}) proved that if $A$ and $B$ are homometric in $\mathbb{Z}_{n}$ with $\sharp(A)=\sharp(B)=4$, they are of the following two types:
\begin{list}{-}{}
\item In $\mathbb{Z}_{4N}: \exists a \in \lbrace 1,2,...,N-1 \rbrace, N\geq 2,$
\begin{equation}
\label{equros}
A= \lbrace0,a,a+N,2N \rbrace \textnormal{ and } B= \lbrace0,a,N,2N+a \rbrace
\end{equation}
\item in $\mathbb{Z}_{13N}$: in that case we do not have any general formulation.
\end{list}
In the case of (\ref{equros}), if we choose $A_1=\lbrace0,2N \rbrace, A_2=\lbrace a,a+N \rbrace, B_1= \lbrace a,2N+a \rbrace$ and $B_2=\lbrace0,N \rbrace$, the conditions of Thm. \ref{thmmajeur} are verified since 
$$
B_1= T_a A_1 \Longrightarrow  \textbf{iv}(A_1)=\textbf{iv}(B_1)
$$
and 
$$
B_2= T_{-a} A_2 \Longrightarrow \textbf{iv}(A_2)=\textbf{iv}(B_2) 
$$
\qed
\end{proof}

\begin{corollary}
\label{liftZ12}
We can lift all the homometric sets in $\mathbb{Z}_{12}$ into right homometric sets in $D_{12}$.
\end{corollary}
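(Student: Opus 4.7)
The plan is to apply Theorem \ref{thmmajeur} to every homometric pair in $\mathbb{Z}_{12}$, reducing the corollary to a finite case analysis. Up to translation and inversion, the Z-related sets in $\mathbb{Z}_{12}$ form a short, well-catalogued list: a unique pair of 4-element sets (the all-interval tetrachords), a small number of 5- and 7-element pairs, the classical Z-related 6-element hexachord pairs, and an 8-element complementary pair; by Tab.~\ref{tabenum} and classical results, no homometric triples occur at $n=12$, so only pairs must be considered.

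First I would dispose of the cardinality-4 case by direct appeal to Corollary \ref{thm4n}, since $12 = 4 \cdot 3$. For each of the remaining pairs $(A,B)$, I would exhibit a decomposition $A = A_1 \cup A_2$, $B = B_1 \cup B_2$ with $\textbf{iv}(A_1) = \textbf{iv}(B_1)$ and $\textbf{iv}(A_2) = \textbf{iv}(B_2)$, and then invoke Theorem \ref{thmmajeur}. The heuristic for finding such a decomposition, already illustrated in the proof of Corollary \ref{thm4n}, is to look for a subset $A_1 \subset A$ whose $T/I$-orbit meets $B$: if some $T_p A_1$ or $I_p A_1$ is contained in $B$ and we set $B_1$ equal to that image, the equality $\textbf{iv}(A_1) = \textbf{iv}(B_1)$ is automatic, and $\textbf{iv}(A_2) = \textbf{iv}(B_2)$ can then be verified directly on the residuals $A_2 = A \setminus A_1$ and $B_2 = B \setminus B_1$ (and in fact is forced by $\textbf{iv}(A) = \textbf{iv}(B)$ together with the ifunc/iv identities underlying Prop. \ref{propproj}).

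The main obstacle is the cardinality-6 case, where there are on the order of a dozen Z-hexachord pairs and no single uniform rule produces the decomposition; each pair requires an ad hoc choice of $A_1$, typically a translation- or inversion-related doubleton or tripleton common to both $A$ and $B$. One might hope for a conceptual shortcut via Theorem \ref{thmcondfourier} by producing matching factorizations $|\mathcal{F}_A|^2 = |\mathcal{F}_{A_1}|^2 + |\mathcal{F}_{A_2}|^2 + 2\,\mathcal{R}e(\overline{\mathcal{F}_{A_1}} \mathcal{F}_{A_2})$ on both sides of the pair, or by exploiting the factorization $\mathbb{Z}_{12} \simeq \mathbb{Z}_3 \times \mathbb{Z}_4$ to organise the hexachords; absent such a shortcut the argument is inherently enumerative, but the claim is that for every homometric pair in $\mathbb{Z}_{12}$ an admissible decomposition can be exhibited, at which point Theorem \ref{thmmajeur} closes the proof.
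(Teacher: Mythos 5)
Your overall strategy --- reduce to Theorem \ref{thmmajeur} by exhibiting, for each homometric pair $(A,B)$ in $\mathbb{Z}_{12}$, a decomposition $A=A_1\cup A_2$, $B=B_1\cup B_2$ with $\textbf{iv}(A_1)=\textbf{iv}(B_1)$ and $\textbf{iv}(A_2)=\textbf{iv}(B_2)$ --- is the right one, and it is the strategy the paper itself follows. But your write-up stops exactly where the mathematical content of the corollary lies: you never produce the decompositions for the pentachord, hexachord, septachord and octachord pairs; you only assert that ``for every homometric pair in $\mathbb{Z}_{12}$ an admissible decomposition can be exhibited.'' That assertion \emph{is} the corollary. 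The paper discharges it by citing Goyette's classification \cite{Goyette} of homometric sets in $\mathbb{Z}_{n}$ into four types according to the cyclic subsets they contain, together with the fact that only types 1 and 2 occur in $\mathbb{Z}_{12}$; sets of those two types come equipped with a decomposition satisfying the hypotheses of Theorem \ref{thmmajeur}. Without either that citation or the explicit finite check (the fifteen hexachord pairs plus the smaller and larger cardinalities), what you have is a plan rather than a proof. The cardinality-4 step via Corollary \ref{thm4n} is fine as stated.

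There is also one substantive error: your parenthetical claim that $\textbf{iv}(A_2)=\textbf{iv}(B_2)$ is \emph{forced} by $\textbf{iv}(A)=\textbf{iv}(B)$ together with $\textbf{iv}(A_1)=\textbf{iv}(B_1)$ is false. The relevant identity is $\textbf{iv}(A)=\textbf{iv}(A_1)+\textbf{iv}(A_2)+\textbf{ifunc}(A_1,A_2)+\textbf{ifunc}(A_2,A_1)$, so cancelling $\textbf{iv}(A_1)=\textbf{iv}(B_1)$ only gives equality of $\textbf{iv}(A_2)$ \emph{plus the cross terms}, and nothing forces the cross terms to agree on the two sides. Hence the second hypothesis of Theorem \ref{thmmajeur} genuinely has to be checked for each chosen decomposition (your ``verified directly'' fallback), and a decomposition obtained merely by transporting some $A_1\subset A$ into $B$ by a $T/I$ operation can fail it; this is precisely why the choice of $A_1$ is the delicate point and why some classification of the pairs, such as Goyette's, is needed to guarantee a good choice always exists.
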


\begin{proof}
In \cite{Goyette} Goyette classifies the homometric sets in $\mathbb{Z}_{n}$ in four types. This classification is based on the existence of cyclic subsets contained in the sets we consider. Goyette says that homometric sets in $\mathbb{Z}_{12}$ are only of type 1 and 2, which are two types that satisfy the conditions of Thm. \ref{thmmajeur}. For more details refer to \cite{Goyette}.
\qed
\end{proof}

In order to give an illustration of these two corollaries and an explicit construction of lifts, we will consider a concrete example that we already mentioned in the introduction, based on the two famous "all interval tetrachords" $S_1=\lbrace 0,1,4,6 \rbrace$ and $S_2=\lbrace 0,1,3,7 \rbrace$ in $\mathbb{Z}_{12}$. These sets are homometric of the form of Eq. (\ref{equros}) with $a=1$ (here $N=3$). From the proofs of Cor. \ref{thm4n} and Thm. \ref{thmmajeur} we know that if we choose $A_+=\lbrace 0,6\rbrace$, $A_-=\lbrace 1,4\rbrace$, $B_+=\lbrace 1,7\rbrace$ and $B_-=\lbrace 0,3\rbrace$ we can lift $S_1$ and $S_2$ in $D_{12}$ into the two right homometric sets
\begin{align*}
&\mathcal{S}_1= \lbrace (0,1),(1,-1),(4,-1),(6, 1)\rbrace \\
&\mathcal{S}_2= \lbrace (0,-1),(1,1),(3,-1),(7, 1)\rbrace
\end{align*}

From a musical point of view the homometric melodies $S_1=\lbrace C,D^{\flat},E,G^{\flat} \rbrace$ and $S_2=\lbrace C,D^{\flat},E^{\flat}, G \rbrace$ lift into the right homometric chord sequences
\begin{align*}
&\mathcal{S}_1= \lbrace C,d^{\flat},e,G^{\flat}\rbrace \\
&\mathcal{S}_2= \lbrace c,D^{\flat},e^{\flat},G\rbrace
\end{align*}

Corollary \ref{liftZ12} is particularly interesting when we consider musical applications. Every homometric melodies in $\mathbb{Z}_{12}$ can be transformed into right homometric chord progressions whose roots form the former melodies.

One interesting question is to find simultaneously right and left homometric sets. We already gave an example in the introduction with the sets:
\begin{align*}
&\mathcal{D}=\lbrace (0,-1),(1,1),(3,1),(4,-1),(8,-1)\rbrace=\lbrace c,D^{\flat},E^{\flat},e,a^{\flat} \rbrace \\
&\mathcal{E}=\lbrace (0,-1),(3,1),(4,-1),(5,1),(8,-1)\rbrace=\lbrace c,E^{\flat},e,F,a^{\flat}\rbrace
\end{align*}
in $ D_{12}$. These sets have in fact a specificity, presented in the following proposition.

\begin{proposition}
\label{propdoublehom}
If two sets $ \mathcal{A}$ and $ \mathcal{B}$ in $D_{n}$ are such that $I_0A_+=A_+$ and $I_0B_+=B_+$ (or $I_0A_-=A_-$ and $I_0B_-=B_-$), then the equations for right homometry and left homometry in $D_{n}$ are the same. In other words
$$
\mathcal{A}\mbox{ and } \mathcal{B} \mbox{ are right homometric } \Longleftrightarrow  \mathcal{A} \mbox{ and } \mathcal{B} \mbox{ are left homometric. }
$$
\end{proposition}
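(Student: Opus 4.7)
The plan is to use the Fourier characterizations from Theorem~\ref{thmcondfourier} directly, which already reveal that right and left homometry share the same first equation $|\mathcal{F}_{A_+}|^2 + |\mathcal{F}_{A_-}|^2 = |\mathcal{F}_{B_+}|^2 + |\mathcal{F}_{B_-}|^2$ and differ only by a conjugation in the second one: right homometry requires $\overline{\mathcal{F}_{A_+}}\mathcal{F}_{A_-}=\overline{\mathcal{F}_{B_+}}\mathcal{F}_{B_-}$, whereas left homometry requires $\mathcal{F}_{A_+}\mathcal{F}_{A_-}=\mathcal{F}_{B_+}\mathcal{F}_{B_-}$. So it suffices to show that under either of the two symmetry hypotheses of the proposition, the two second equations are equivalent.

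The key observation I would use is the following elementary fact: if $X \subset \mathbb{Z}_n$ satisfies $I_0 X = X$ (i.e.\ $X = -X$), then $\mathcal{F}_X$ is real-valued. Indeed, for any $t$,
\begin{equation*}
\overline{\mathcal{F}_X(t)}=\sum_{k\in X} e^{2i\pi kt/n}=\sum_{k\in -X}e^{-2i\pi kt/n}=\mathcal{F}_{-X}(t)=\mathcal{F}_X(t).
\end{equation*}
In particular, $\overline{\mathcal{F}_X}=\mathcal{F}_X$.

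From here the proof splits into the two parallel cases stated in the proposition. In the first case, where $I_0 A_+ = A_+$ and $I_0 B_+ = B_+$, the fact just established gives $\overline{\mathcal{F}_{A_+}}=\mathcal{F}_{A_+}$ and $\overline{\mathcal{F}_{B_+}}=\mathcal{F}_{B_+}$, so
\begin{equation*}
\overline{\mathcal{F}_{A_+}}\mathcal{F}_{A_-}=\overline{\mathcal{F}_{B_+}}\mathcal{F}_{B_-}\iff \mathcal{F}_{A_+}\mathcal{F}_{A_-}=\mathcal{F}_{B_+}\mathcal{F}_{B_-},
\end{equation*}
which together with the common first equation gives the equivalence of right and left homometry via Theorem~\ref{thmcondfourier}. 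In the second case, where $I_0 A_- = A_-$ and $I_0 B_- = B_-$, the same argument applied to $A_-$ and $B_-$ gives $\overline{\mathcal{F}_{A_-}}=\mathcal{F}_{A_-}$ and $\overline{\mathcal{F}_{B_-}}=\mathcal{F}_{B_-}$; taking the complex conjugate of the right-homometry equation $\overline{\mathcal{F}_{A_+}}\mathcal{F}_{A_-}=\overline{\mathcal{F}_{B_+}}\mathcal{F}_{B_-}$ and using these identities yields exactly the left-homometry equation $\mathcal{F}_{A_+}\mathcal{F}_{A_-}=\mathcal{F}_{B_+}\mathcal{F}_{B_-}$, and conversely.

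There is no real obstacle here; the whole argument reduces to the single observation that inversion-symmetric subsets of $\mathbb{Z}_n$ have real Fourier transforms, and then to reading off Theorem~\ref{thmcondfourier}. The only point to be slightly careful about is formulating the two symmetric cases uniformly, since in the second case one needs to conjugate both sides of the right-homometry equation (rather than directly replace the bar by nothing), but this is immediate.
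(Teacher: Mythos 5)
Your proof is correct and follows essentially the same route as the paper: both reduce the claim to the observation that an inversion-symmetric subset of $\mathbb{Z}_n$ has a real-valued Fourier transform, so that the second equations of the right- and left-homometry characterizations in Theorem~\ref{thmcondfourier} coincide. Your treatment of the $I_0A_-=A_-$ case (conjugating both sides of the right-homometry equation) is slightly more explicit than the paper's "it works exactly the same", but it is the same argument.
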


\begin{proof}
If $I_0A_+=A_+$ and $I_0B_+=B_+$, we have $\overline{\mathcal{F}_{A_+}}=\mathcal{F}_{A_+}$ and $\overline{\mathcal{F}_{B_+}}=\mathcal{F}_{B_+}$. Thus the conditions of right and left homometry are the same (Eq. \ref{equrightcondfourier} and Eq. \ref{equleftcondfourier} are identical). If $I_0A_-=A_-$ and $I_0B_-=B_-$ it works exactly the same.
\qed
\end{proof}

The above sets $\mathcal{E}$ and $\mathcal{D}$ correspond to lifts of the sets $D=\lbrace 0,1,3,4,8\rbrace$ and $E=\lbrace 0,3,4,5,8\rbrace$ in $\mathbb{Z}_{12}$, with $D_+=\lbrace 1,3\rbrace$, $D_-=\lbrace 0,4,8\rbrace$, $E_+=\lbrace 3,5\rbrace$ and $E_-=\lbrace 0,4,8\rbrace$. Here $D_-=E_-$ and $I_0D_-=D_-$ hence we are in the situation of Prop. \ref{propdoublehom}. Remark also that $E_+=T_2D_+$. 

The first pair of Tab. \ref{tabmusicalform} with $p=5$ verifies also the conditions of Prop. \ref{propdoublehom}:
$$
(\lbrace C,c,d,E,A^{\flat} \rbrace , \lbrace C,d,e,E,A^{\flat} \rbrace).
$$ 

In the next section we present a way to extend the musical interpretation of homometry in the dihedral group.

\section{Musical Interpretation of Homometry in $D_{n}$ for all $n$}

We gave a musical interpretation to homometry in $ D_{n}$ only for the case $n=12$, as homometry between sets of major and minor triads : we identified in $ D_{12}$ the pairs $(k, +1)$ with the major triads and the pairs $(k, -1)$ with the minor triads. We will generalize this interpretation for all $n$.

If we call $\langle X\rangle$ the set generated by the action of the $T/I$-group on a chosen chord $X$ -- i.e. $\langle X\rangle=\lbrace X,T_1X, ..., T_{11}X, I_0X, ...,I_{11}X\rbrace$ -- we have a natural action of the $T/I$-group on $\langle X\rangle$ 
\begin{eqnarray*}
&\lambda :&T/I \longrightarrow Sym(\langle X\rangle)\\
& &g \longmapsto (x \mapsto gx)
\end{eqnarray*}
We will suppose that this action is simply transitive. We know from \cite{Fiore} and \cite{Popoff1} that this action is essentially the same as left multiplication ($g(hX)=(gh)X$), and that we can define a second action 
\begin{eqnarray*}
&\rho :&T/I \longrightarrow Sym(\langle X\rangle)\\
& &g \longmapsto (hX \mapsto hg^{-1}X),
\end{eqnarray*}
which is the same as right multiplication. The group $\rho(T/I)$ is the dual group to $\lambda(T/I)$, the functions $T/I \longrightarrow \langle X\rangle$ and $\rho$ depend on $X$, but the group $\rho(T/I)$ does not. If we choose $X=C$-major, $\rho(T/I)$ is in fact the $PLR$-group which is, as we know, the dual group to the $T/I$-group. The operations $P$, $L$ and $R$ correspond to right multiplication by $I_7$, $I_{11}$ and $I_4$. With this point of view we generalize the action of the $PLR$-group on the set $S$, seeing it as a right action of the $T/I$-group on $\langle X\rangle$. In this perspective, we obtain on Fig. \ref{figcomposingmusic1} a new version of Fig. \ref{figleftright1} with the sets $A= \lbrace c,D^\flat,E^\flat,e,a^\flat\rbrace$ and $B=\lbrace c,E^\flat,e,F,a^\flat\rbrace$.

\begin{figure}
\[
\xymatrix{
I_7C \ar[dr]^{I_{10}} \ar[rr]^{I_8} & & T_1C \ar[dl]_{T_2} \ar[dd]_{I_4} \\
& T_3C \ar[dl]^{I_2} \ar[dr]_{I_6} & \\
I_{11}C \ar[uu]_{T_8} & & I_3C \ar[ll]^{T_8} }
\quad
\xymatrix{
I_7C \ar[dr]^{T_4} \ar[rr]^{I_{10}} & & T_3C \ar[dl]_{I_2} \ar[dd]_{I_6} \\
& I_{11}C \ar[dl]^{I_4} \ar[dr]_{T_4} & \\
T_5C \ar[uu]_{I_0} & & I_3C \ar[ll]^{I_8} }
\]

\[
\xymatrix{
I_7C \ar[dr]^{I_4} \ar[rr]^{I_6} & & T_1C \ar[dl]_{T_2} \ar[dd]_{I_2} \\
& T_3C \ar[dl]^{I_8} \ar[dr]_{I_0} & \\
I_{11}C \ar[uu]_{T_4} & & I_3C \ar[ll]^{T_4} }
\quad
\xymatrix{
I_7C \ar[dr]^{T_8} \ar[rr]^{I_{4}} & & T_3C \ar[dl]_{I_8} \ar[dd]_{I_0} \\
& I_{11}C \ar[dl]^{I_6} \ar[dr]_{T_8} & \\
T_5C \ar[uu]_{I_2} & & I_3C \ar[ll]^{I_{10}} }
\]
\caption{Left-intervals (top) and right-intervals (bottom) in the $T/I$-group for the two sets $A=\lbrace c, D^{\flat}, E^{\flat}, e, a^{\flat} \rbrace$ and $B=\lbrace c, E^{\flat}, e, F, a^{\flat} \rbrace$ with $X=C$-major.}
\label{figcomposingmusic1}
\end{figure}
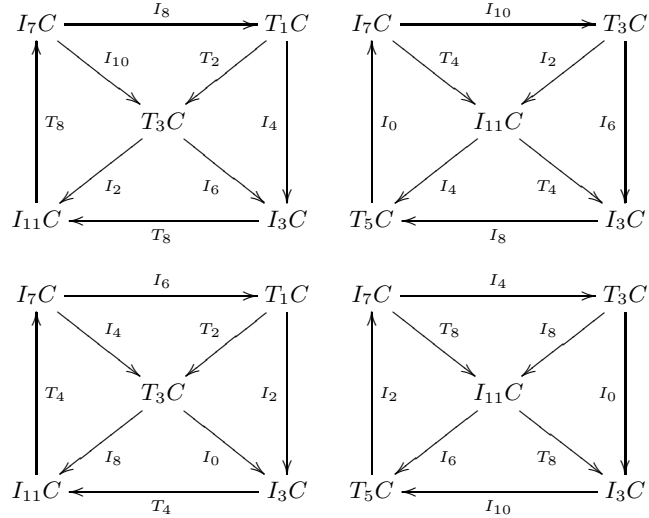
With other choices of $X$ such that the action of the $T/I$-group on $X$ is simply transitive, we deduce from the same example that $\lbrace X, I_8X, T_8X, T_4X, I_{10}X \rbrace$ and $\lbrace X, I_{10}X, T_8X, I_0X, T_4X \rbrace$ are two left-homometric sets (top of Fig. \ref{figcomposingmusic2}). We can then generalize our interpretation of homometry in the dihedral groups to chords with more than 3 notes, and also to unclassified chords. For instance with $X=C^7$, we obtain the two left-homometric chord sequences of Fig. \ref{figcomposingmusic2} (bottom), which contain dominant and half-diminished chords. With $X=[0,1,5]=[C,D^\flat,F]$ we obtain the two left-homometric sets
\begin{align*}
&\lbrace [C,D^\flat,F],[A^\flat,A,E],[A^\flat,A,D^\flat],[E,F,A],[B^\flat,B,G^\flat] \rbrace \\
&\lbrace [C,D^\flat,F],[B^\flat,B,G^\flat],[A^\flat,A,D^\flat],[C,D^\flat,A^\flat],[E,F,A] \rbrace.
\end{align*}
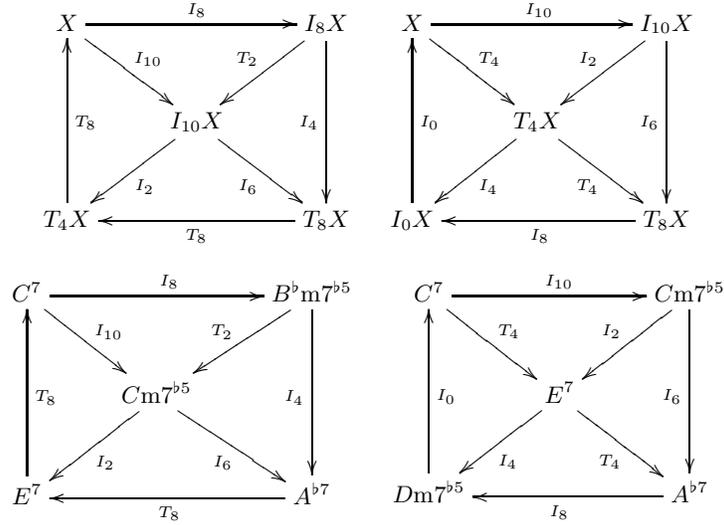
\begin{figure}
\[
\xymatrix{
X \ar[dr]^{I_{10}} \ar[rr]^{I_8} & & I_8X \ar[dl]_{T_2} \ar[dd]_{I_4} \\
& I_{10}X \ar[dl]^{I_2} \ar[dr]_{I_6} & \\
T_4X \ar[uu]_{T_8} & & T_8X \ar[ll]^{T_8} }
\quad
\xymatrix{
X \ar[dr]^{T_4} \ar[rr]^{I_{10}} & & I_{10}X \ar[dl]_{I_2} \ar[dd]_{I_6} \\
& T_4X \ar[dl]^{I_4} \ar[dr]_{T_4} & \\
I_0X \ar[uu]_{I_0} & & T_8X \ar[ll]^{I_8} }
\]

\[
\xymatrix{
C^7 \ar[dr]^{I_{10}} \ar[rr]^{I_8} & & B^{\flat}\mbox{m}7 ^{\flat 5} \ar[dl]_{T_2} \ar[dd]_{I_4} \\
& C\mbox{m}7^{\flat 5} \ar[dl]^{I_2} \ar[dr]_{I_6} & \\
E^7 \ar[uu]_{T_8} & & A^{\flat 7} \ar[ll]^{T_8} }
\quad
\xymatrix{
C^7 \ar[dr]^{T_4} \ar[rr]^{I_{10}} & & C\mbox{m}7^{\flat 5} \ar[dl]_{I_2} \ar[dd]_{I_6} \\
& E^7 \ar[dl]^{I_4} \ar[dr]_{T_4} & \\
D\mbox{m}7^{\flat 5} \ar[uu]_{I_0} & & A^{\flat 7} \ar[ll]^{I_8} }
\]
\caption{(Top) Two left-homometric sets for $X$ such that the $T/I$-group acts simply transitively on $X$. (Bottom) Application with $X=C^7$: we obtain two left-homometric chord progressions containing.}
\label{figcomposingmusic2}
\end{figure}

Besides, we can also obtain a bijection with the dihedral group $ D_{12}$, choosing $(0,1)=X$, $(k,1)=T_kX$, $(0,-1)=I_0X$ and $(k,-1)=I_kX=T_kI_0X$. It allows us to give a musical interpretation to homometry in the dihedral groups for all $n$. Indeed the above identifications are valid for all $n$ and give a bijection between the $T/I$-group in $\mathbb{Z}_{n}$ and the dihedral group $D_{n}$. Thus the left- and the right-homometry in $D_{n}$ can be interpreted as the left- and the right-homometry coming from the left and the right actions of the $T/I$-group on some $\langle X\rangle$ for all $n$. Musically it allows us to use microtonality.

\section{Conclusion}

The present paper concerns homometry in the non-commutative dihedral groups. It incorporates results coming from both \cite{Genuys1} and \cite{Genuys2}. The left and the right actions of the dihedral group on itself can be seen as the action of the $T/I$-group and the $PLR$-group on the set $S$ of major and minor triads. Consequently we interpreted homometry in $D_{12}$ as homometry between sets of triads in $S$. As mentioned in the last section, this interpretation can however be extended to sets on which the $T/I$-group acts simply transitively, which allows us to widen the musical applications of this non-commutative homometry.

We presented important properties (including the notion of \textit{lift}) linking homometry in $\mathbb{Z}_{n}$ and homometry in $D_{n}$: right-homometry in $D_{n}$ implies homometry in $\mathbb{Z}_{n}$, and conversely in some cases we can always lift homometric sets from $\mathbb{Z}_{n}$ to $D_{n}$. We also mentioned cases where we can build homometric lifts both for the left and the right actions. However some problems are still unsolved:
\begin{list}{-}{}
\item{we did a complete enumeration of homometric sets in $D_{n}$ only for small values of $n$ and $p$ (cardinality);}
\item{we could find other properties concerning left homometry. In particular it would be interesting to find general cases where we can build left-homometric lifts.}
\end{list}
Finally we want to mention the possibility to use a similar approach when studying homometry in other semi-direct products. For instance we present in \cite{Genuys2} some equivalent results in the time-spans group $(\mathbb{R},+) \rtimes (\mathbb{R}_+^*,.)$ (in this group sets can be interpreted as musical rhythms) introduced by Lewin.

\bibliographystyle{splncs}

\section*{Acknowledgment}
\small{The author thanks Alexandre Popoff for his fruitful and efficient collaboration to this work.}

\end{document}